\newcommand\COMP{\hbox{C\kern -.58em {\raise .54ex \hbox{$\scriptscriptstyle |$}}
\kern-.55em {\raise .53ex \hbox{$\scriptscriptstyle |$}} }}
\newcommand\NN{\hbox{I\kern-.2em\hbox{N}}}
\newcommand\RR{\hbox{I\kern-.2em\hbox{R}}}
\newcommand\sRR{{\it \hbox{I\kern-.2em\hbox{R}}}}
\newcommand\QQ{\hbox{I\kern-.53em\hbox{Q}}}
\newcommand\PP{\hbox{I\kern-.53em\hbox{P}}}
\newcommand\EE{\hbox{I\kern-.53em\hbox{E}}}
\newcommand\ZZ{{{\rm Z}\kern-.28em{\rm Z}}}
\newcommand\be{\begin{equation}}
\newcommand\ee{\end{equation}}
\DeclareMathOperator*{\essinf}{ess\,inf}
\DeclareMathOperator*{\esssup}{ess\,sup}
\newtheorem{theorem}{Theorem}[section]
\newtheorem{proposition}[theorem]{Proposition}
\newtheorem{remark}[theorem]{Remark}
\newtheorem{lemma}[theorem]{Lemma}
\newtheorem{definition}[theorem]{Definition}
\newcommand*\bigcdot{\mathpalette\bigcdot@{.5}}
\newcommand*\bigcdot@[2]{\mathbin{\vcenter{\hbox{\scalebox{#2}{$\m@th#1\bullet$}}}}}
\newcommand{\is}{\bigcdot }
\def \Lbrack {[\![}
\def \Rbrack {]\!]}
\numberwithin{equation}{section}
\begin{document}
\title{Optimal stopping problem under random horizon}

\author[1]{Tahir Choulli\thanks{tchoulli@ualberta.ca}}
\author[2]{Safa' Alsheyab\thanks{alsheyab@ualberta.ca}}

\affil[1]{Mathematical and Statistical Sciences Dept., University of  Alberta, Edmonton, AB, Canada}

\affil[2]{Department of Mathematics and Statistics, Jordan University of Science and Technology, Irbid 22110, Jordan}

\renewcommand\Authands{ and }





\maketitle

{\bf P.S. This version is the first half of the old version ``Optimal stooping problem: Mathematical Structures and Linear RBSDEs,  ArXiv:2301.09836" and which has been split into two parts. The second part focuses on linear RBSDEs under random horizon.}

\begin{abstract}
This paper considers a pair $(\mathbb{F},\tau)$, where $\mathbb{F}$  is a filtration representing the ``public" flow of information which is available to all agents overtime, and $\tau$ is a random time which might not be an $\mathbb{F}$-stopping time. This setting covers the case of credit risk framework where $\tau$ models the default time of a firm or client,  and the setting of life insurance where $\tau$ is the death time of an agent. It is clear that random times can not be observed before their occurrence. Thus the larger filtration $\mathbb{G}$, which incorporates $\mathbb{F}$ and makes $\tau$ observable, results from the progressive enlargement of $\mathbb{F}$ with $\tau$. For this informational setting, governed by $\mathbb{G}$, we analyze the optimal stopping problem in three main directions. The first direction consists of characterizing the existence of the solution to this problem in terms of $\mathbb{F}$-observable processes. The second direction lies in deriving the {\it mathematical structures} of the value process of this control problem,  while the third direction singles out the associated optimal stopping problem under $\mathbb{F}$. These three aspects allow us  to quantify deeply how $\tau$ impact the optimal stopping problem, while they are also vital for  studying reflected backward stochastic differential equations which arise {\it naturally} from pricing and hedging of vulnerable claims.
\end{abstract}

{\bf{Keywords:} }Optimal Stopping Problem; Snell envelop; Random horizon; Progressive enlargement of filtration.

\section{Introduction}
In this paper, we consider a complete probability space $\left(\Omega, {\cal F}, P\right)$, on which we consider given a complete and right-continuous filtration $\mathbb F:=({\cal F}_t)_{t\geq0}$. Besides this initial system $(\Omega, {\cal F}, \mathbb F, P)$, we consider an arbitrary random time $\tau$ (i.e. an ${\cal{F}}$-measurable random variable with values in $[0,+\infty]$) that might not be an $\mathbb F$-stopping time. As in most applications, such as life insurance and credit risk where $\tau$ is the death time and the default time respectively, $\tau$ is observable when  it occurs only and can not be seen before. Thus, the flow of information that incorporates both $\tau$ and  $\mathbb{F}$, which will be denoted throughout the paper by $\mathbb{G}=({\cal{G}}_t)_{t\geq 0}$, makes $\tau$ a stopping time and is known in the literature as the progressive enlargement of $\mathbb{F}$ with $\tau$. For this new system  $(\Omega, {\cal F}, \mathbb{G}, P)$, { \it{our objective}} consists of analyzing the following problem
\begin{equation}\label{OSP4G}
{\cal S}_{\sigma}^{\mathbb G}:=\esssup_{\theta\in{\cal{J}}_{\sigma}}E\left[X^{\mathbb{G}}_{\theta}\ \big|\quad{\cal{G}}_{\sigma}\right],
\end{equation}
 where ${\sigma}$ is a $\mathbb{G}$-stopping time, and ${\cal{J}}_{\sigma}$ is the set of $\mathbb{G}$-stopping times that are finite and greater or equal to $\sigma$. $X^{\mathbb G}$  is a $\mathbb{G}$-optional process representing the reward, satisfying ``some integrability condition", and is stopped at $\tau$ (i.e. it does not vary after $\tau$). {{The essential supremum of an arbitrary family of random variables is the smallest random variable which is an upper bound for each element of this family almost surely, see \cite{Neveu} for more details and related properties.}} \\
 This problem is known as the optimal stopping problem, and it is an example of stochastic control problem. For more details about its origin, its applications and its evolution, we refer the reader to \cite{Arai,ElKaroui, Quenez1,Peskir,Shiryaev} and the references therein to cite a few. Herein, we address this optimal stopping problem, and we aim to measure the impact of $\tau$ on this problem in many aspects. In particular, for this problem we answer the following questions.
 \begin{enumerate}
 \item Can we associate to (\ref{OSP4G}) an optimal stopping problem under $\mathbb{F}$ with reward $\widetilde{X}^{\mathbb{F}}$ and value process ${\cal S}^{\mathbb{F}}$? 
 \item How the two pairs $(X^{\mathbb{G}},{\cal S}^{\mathbb G})$ and $(\widetilde{X}^{\mathbb{F}},{\cal S}^{\mathbb{F}})$ are connected to each other? 
 \item What are the structures in ${\cal S}^{\mathbb G}$ induced by $\tau$? 
 \item How the maximal (minimal) optimal times of (\ref{OSP4G}) and their $\mathbb{F}$-optimal stopping problem counter parts are related to each others?\end{enumerate}
 {\it{One of the direct applications}} of our optimal stopping problem under random horizon $\tau$, lies in studying linear RBSDE having the form of  
\begin{align}\label{RBSDE1}
\begin{cases}
dY_{t}=-f(t)d(t\wedge\tau)-d(K_{t}+M_{t})+Z_{t}dW_{t\wedge\tau},\quad {Y}_{\tau}=\xi,\\
 Y\geq S\quad\mbox{on}\quad\Lbrack 0,\tau\Lbrack,\quad\mbox{and}\quad E\left[\displaystyle\int_{0}^{\tau}(Y_{t-}-S_{t-})dK_{t}\right]=0.
\end{cases}
\end{align}
Here  $\mathbb F$ is assumed to be generated by a Brownian motion $W$, $f$ is an $\mathbb{F}$-progressively measurable process (the driver rate), $\xi$ is a random variable, and $S$ is a  {{right-continuous with left-limits (RCLL for short hereafter)}} $\mathbb F$-adapted with values in $[-\infty, +\infty)$. {{The two processes $Y_{-}$ and $S_{-}$ are the left limits of $Y$ and $S$ respectively and are defined at the of Subsection \ref{Subsection1} for the sake of a smooth presentation)}}. For this direct application, we refer the reader to our earlier version of the whole work \cite{Choulli6}. The relationship between optimal stopping problem and RBSDEs is well understood nowadays, and we refer the reader to \cite{Quenez,Hamadane0,Hamadane1,Vanmaele1,Vanmaele2,QuenezSulem} and the references therein to cite a few. 

 This paper has four sections including the current one. The second section defines general notations, the mathematical setting of the random horizon $\tau$, and its preliminaries. The third section addresses the optimal stopping problem under stopping with $\tau$ in various aspects. {{We give our conclusion in the fourth section}}. The paper has an appendix where we prove our technical lemmas.
 
\section{Notation and the random horizon setting}

This section has two subsections. The first subsection defines general notation that will be used throughout the paper, while the second subsection presents the progressive enlargement setting associated to $\tau$ and its preliminaries. 
\subsection{General notation}\label{Subsection1}
By  ${\mathbb H}$ we denote an arbitrary  filtration that satisfies the usual conditions of completeness and right continuity.  For any process $X$, the $\mathbb H$-optional projection and  the $\mathbb H$-predictable projection, when they exist, will be denoted by $^{o,\mathbb H}X$  and $^{p,\mathbb H}X$ respectively. The set ${\cal M}(\mathbb H, Q)$ (respectively  ${\cal M}^{p}(\mathbb H, Q)$ for $p\in (1,+\infty)$) denotes the set of all $\mathbb H$-martingales (respectively $p$-integrable martingales) under $Q$, while ${\cal A}(\mathbb H, Q)$ denotes the set of all $\mathbb H$-optional processes that are  {{RCLL}} with integrable variation under $Q$. When $Q=P$, we simply omit the probability for the sake of simple notation.  For a {{$d$-dimensional $\mathbb H$-semimartingale $X$, by $L(X,\mathbb H)$ we denote the set of $\mathbb H$-predictable processes (either $d$-dimensional or one dimensional) that are $X$-integrable in the semimartingale sense.  For $\varphi\in L(X,\mathbb H)$, the resulting integral of $\varphi$ with respect to $X$ is denoted by $\varphi\is{X}$. If $\varphi$ is $d$-dimensional (respectively one dimensional), then $\varphi\is{X}$ is a one dimensional process (respectively $d$-dimensional process such as $I_{\Rbrack0,\sigma\Rbrack}\is{X}=X^{\sigma}-X_0$ for any stopping time $\sigma$). For more details about stochastic integral and its intrinsic calculus and notation, we refer the reader to \cite{DellacherieMeyer80}, \cite{YanBook}, and \cite{JS03}.}} For $\mathbb H$-local martingale $M$, we denote by $L^1_{loc}(M,\mathbb H)$ the set of $\mathbb H$-predictable processes $\varphi$ that are $M$-integrable and the resulting integral $\varphi\is M$ is an $\mathbb H$-local martingale. If ${\cal C}(\mathbb H)$ is a set of processes that are adapted to $\mathbb H$, then ${\cal C}_{loc}(\mathbb H)$ is the set of processes, $X$, for which there exists a sequence of $\mathbb H$-stopping times, $(T_n)_{n\geq 1}$, that increases to infinity and $X^{T_n}$ belongs to ${\cal C}(\mathbb H)$, for each $n\geq 1$.  The $\mathbb H$-dual optional projection and the $\mathbb H$-dual predictable projection of a process $V$ with finite variation, when they exist, will be denoted by  $V^{o,\mathbb H}$  and $V^{p,\mathbb H}$ respectively. For any real-valued $\mathbb H$-semimartingale $L$, we denote by ${\cal E}(L)$ the Dol\'eans-Dade (stochastic) exponential. It is the unique solution to $dX=X_{-}dL,\ X_0=1,$  given by
\begin{eqnarray}\label{DDequation}
 {\cal E}_t(L)=\exp\left(L_t-L_0-{1\over{2}}\langle L^c\rangle_t\right)\prod_{0<s\leq t}(1+\Delta L_s)e^{-\Delta L_s}.\end{eqnarray}
 Throughout the paper,  {{we consider the following notations: For any random time $\sigma$ and any process $X$, we denote by $X^{\sigma}$ the stopped process given by $X^{\sigma}_t:=X_{\sigma\wedge t},\ t\geq 0$. For any RCLL process $X$, we denote $X_{-}$ the process left limits of $X$ and is defined as $X_{-}=(X_{t-})_{t\geq 0}$, where $X_{0-}=X_0$ and $X_{t-}=\lim_{s\upuparrows t}X_s$ for $t>0$. A process $X$ is called a BMO $\mathbb{F}$-martingale if $X\in{\cal{M}}(\mathbb{F})$ (hence $X_t=E[X_{\infty}\big|{\cal{F}}_t]$) and there exists a constant $C>0$ such that $E[\vert X_{\infty}-X_{\sigma-}\vert\big|\mathcal{F}_{\sigma}]\leq C$ for any $\mathbb{F}$-stopping time $\sigma$. For more details about BMO martingales and their properties we refer the reader to \cite{DellacherieMeyer80}, or its English version \cite{DELLACHERIE}.}}
\subsection{The random horizon and the progressive enlargement of $\mathbb F$}
In addition to this initial model $\left(\Omega, {\cal F}, \mathbb F,P\right)$, we consider an arbitrary random time, $\tau$, that might not be an $\mathbb F$-stopping time. This random time is parametrized though $\mathbb F$ by the pair $(G, \widetilde{G})$, called survival probabilities or Az\'ema supermartingales, and are given by
\begin{eqnarray}\label{GGtilde}
G_t :=\ ^{o,\mathbb F}(I_{\Lbrack0,\tau\Lbrack})_t= P(\tau > t | {\cal F}_t) \ \mbox{ and } \ \widetilde{G}_t :=\ ^{o,\mathbb F}(I_{\Lbrack0,\tau\Rbrack})_t= P(\tau \ge t | {\cal F}_t).\end{eqnarray}
Furthermore, the following process $m$ given by 
\begin{equation} \label{processm}
m := G + D^{o,\mathbb F},\quad \mbox{where}\quad{D}:=I_{\Lbrack\tau,+\infty\Lbrack},
\end{equation}
is a BMO $\mathbb F$-martingale and plays important role in our analysis. The flow of information $\mathbb G$, which incorporates both $\mathbb F$ and $\tau$, is defined as follows. 
\begin{equation}\label{processD}
\mathbb G:=({\cal G}_t)_{t\geq 0},\ {\cal G}_t:={\cal G}^0_{t+}{{:=\bigcap_{s>t}{\cal G}^0_{s}}}\ \mbox{where} \ {\cal G}_t^0:={\cal F}_t\vee\sigma\left(D_s,\ s\leq t\right)\ .
\end{equation}
 Throughout the paper, on $\Omega\times [0,+\infty)$, we consider the $\mathbb F$-optional  $\sigma$-field  denoted by ${\cal O}(\mathbb F)$ and  the $\mathbb F$-progressive  $\sigma$-field denoted by $\mbox{Prog}(\mathbb F)$. Thanks to  \cite[Theorem 2.3]{Choulli1} and \cite[Theorem 2.3 and Theorem 2.11]{ChoulliDavelooseVanmaele}, we recall 
\begin{theorem}\label{Toperator} The following assertions hold.\\
{\rm{(a)}} For any $M\in{\cal M}_{loc}(\mathbb F)$,  the process
\begin{equation} \label{processMhat}
{\cal T}(M) := M^{\tau} -{\widetilde{G}}^{-1} I_{\Rbrack 0,\tau\Rbrack} \is [M,m] +  I_{\Rbrack 0,\tau\Rbrack} \is\Big(\sum \Delta M I_{\{\widetilde G=0<G_{-}\}}\Big)^{p,\mathbb F}\end{equation}
 is a $\mathbb G$-local martingale {{(recall that $G_{t-}$ coincides with $P(\tau \geq t | {\cal F}_{t-})=E[\widetilde{G}_t\big|{\cal{F}}_{t-}]$ for $t>0$). }}\\
 {\rm{(b)}}  The process 
\begin{equation} \label{processNG}
N^{\mathbb G}:=D - \widetilde{G}^{-1} I_{\Rbrack 0,\tau\Rbrack} \is D^{o,\mathbb  F}
\end{equation}
is a $\mathbb G$-martingale with integrable variation. Moreover, $H\is N^{\mathbb G}$ is a $\mathbb G$-local martingale with locally integrable variation for any $H$ belonging to ${\mathcal{I}}^o_{loc}(N^{\mathbb G},\mathbb G) $ given by 
\begin{equation} \label{SpaceLNG}
{\mathcal{I}}^o_{loc}(N^{\mathbb G},\mathbb G) := \Big\{K\in \mathcal{O}(\mathbb F):\quad \vert{K}\vert G{\widetilde G}^{-1} I_{\{\widetilde{G}>0\}}\is D\in{\cal A}_{loc}(\mathbb G)\Big\}.
\end{equation}
\end{theorem}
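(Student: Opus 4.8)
Both statements belong to the standard machinery of progressive enlargement, and the quickest justification is simply to invoke \cite[Theorem 3]{ACJ} for assertion {\rm(a)} and \cite[Theorems 2.3 and 2.11]{ChoulliDavelooseVanmaele} for assertion {\rm(b)}. For transparency, here is the plan I would follow. The argument rests on two ingredients: (i) the structure of $\mathbb G$-measurable processes strictly before $\tau$ --- every $\mathbb G$-predictable (resp. $\mathbb G$-optional) process coincides on $\Rbrack0,\tau\Rbrack$ (resp. on $\Lbrack0,\tau\Lbrack$) with $\widetilde H\,I_{\Rbrack0,\tau\Rbrack}$ (resp. $\widetilde H\,I_{\Lbrack0,\tau\Lbrack}$) for some $\mathbb F$-predictable (resp. $\mathbb F$-optional) process $\widetilde H$; and (ii) the dual-optional-projection bookkeeping --- ${}^{o,\mathbb F}(I_{\Lbrack0,\tau\Rbrack})=\widetilde G$, and $E[\int_0^{\infty}Y_s\,dD^{o,\mathbb F}_s]=E[Y_{\tau}I_{\{\tau<\infty\}}]$ for every nonnegative $\mathbb F$-optional $Y$. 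To these I would add the classical facts that $\widetilde G>0$ on $\Lbrack0,\tau\Rbrack$ and that $I_{\{\widetilde G=0\}}\is D^{o,\mathbb F}\equiv 0$ (the latter because $\widetilde G_{\tau}>0$ a.s.), so that $\widetilde G^{-1}I_{\Rbrack0,\tau\Rbrack}\is D^{o,\mathbb F}$ is a well-defined increasing process.

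I would treat {\rm(b)} first. Since $\Var(N^{\mathbb G})=D+\widetilde G^{-1}I_{\Rbrack0,\tau\Rbrack}\is D^{o,\mathbb F}$, applying the identity in (ii) to $Y_s=\widetilde G_s^{-1}I_{\{s\le\tau\}}$ and pulling out the $\mathbb F$-optional factor $\widetilde G^{-1}$ collapses $\widetilde G_s^{-1}{}^{o,\mathbb F}(I_{\{s\le\tau\}})=\widetilde G_s^{-1}\widetilde G_s=I_{\{\widetilde G_s>0\}}$, whence $E[(\widetilde G^{-1}I_{\Rbrack0,\tau\Rbrack}\is D^{o,\mathbb F})_{\infty}]\le E[D^{o,\mathbb F}_{\infty}]=P(\tau<\infty)$ and $N^{\mathbb G}\in{\cal A}(\mathbb G)$. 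For the martingale property, since $dN^{\mathbb G}$ is carried by $\Rbrack0,\tau\Rbrack$ it suffices to check $E[(H\is N^{\mathbb G})_{\infty}]=0$ for $H=\widetilde H\,I_{\Rbrack0,\tau\Rbrack}$ with $\widetilde H$ bounded $\mathbb F$-predictable; this reduces, via $E[\widetilde H_{\tau}I_{\{\tau<\infty\}}]=E[\int_0^{\infty}\widetilde H_s\,dD^{o,\mathbb F}_s]$, to the identity $E[\int_0^{\infty}\widetilde H_s\,dD^{o,\mathbb F}_s]=E[\int_0^{\infty}\widetilde H_s\widetilde G_s^{-1}I_{\{s\le\tau\}}\,dD^{o,\mathbb F}_s]$, which follows from the same cancellation together with $I_{\{\widetilde G=0\}}\is D^{o,\mathbb F}\equiv0$. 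Hence $N^{\mathbb G}$ has vanishing $\mathbb G$-dual predictable projection and, lying in ${\cal A}(\mathbb G)$, is a $\mathbb G$-martingale. The ``moreover'' part I would obtain by localizing along a reducing sequence of $\mathbb G$-stopping times for $\vert H\vert G\widetilde G^{-1}I_{\{\widetilde G>0\}}\is D$ and rerunning the same computation with the now merely $\mathbb F$-optional $H$ (using ${}^{o,\mathbb F}$ in place of ${}^{p,\mathbb F}$) on each stopped interval --- this is exactly the integrability class that keeps both summands of $\Var(H\is N^{\mathbb G})$ locally integrable there.

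For {\rm(a)}, the plan is to localize and reduce to $M$ a bounded $\mathbb F$-martingale. The stopped process $M^{\tau}$ is then a $\mathbb G$-semimartingale (Jeulin--Yor), so I would write its canonical decomposition $M^{\tau}={\cal T}(M)+A$ with $A$ a $\mathbb G$-predictable process of finite variation. Because $M^{\tau}$ is constant on $\Rbrack\tau,+\infty\Lbrack$, the drift $A$ is carried by $\Rbrack0,\tau\Rbrack$, hence $A=\widetilde A\,I_{\Rbrack0,\tau\Rbrack}$ for some $\mathbb F$-predictable $\widetilde A$; one identifies $\widetilde A$ by testing against bounded $\mathbb F$-predictable processes, taking $\mathbb F$-dual predictable projections, and using $m=G+D^{o,\mathbb F}\in{\cal M}(\mathbb F)$. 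The covariation of $M$ with $m$ produces the term $\widetilde G^{-1}I_{\Rbrack0,\tau\Rbrack}\is[M,m]$, while the jumps of $M$ occurring at $\mathbb F$-predictable instants at which $\tau$ has just terminated, i.e.\ on $\{\widetilde G=0<G_{-}\}$ where $\widetilde G^{-1}$ is unavailable, are invisible to that term and must be restored separately, which is the source of $I_{\Rbrack0,\tau\Rbrack}\is(\sum\Delta M\,I_{\{\widetilde G=0<G_{-}\}})^{p,\mathbb F}$. Reassembling the pieces (with the correct signs) and undoing the localization yields the stated form of ${\cal T}(M)$.

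The step I expect to be the main obstacle, in either assertion, is the bookkeeping around the ``surprise'' set $\{\widetilde G=0<G_{-}\}$, together with the verification that all correction terms carry the local integrability needed to legitimately take $\mathbb G$- and $\mathbb F$-dual predictable projections on $\Rbrack0,\tau\Rbrack$ and to divide by $\widetilde G$ there; this is precisely why {\rm(a)} needs the extra predictable-projection term and why {\rm(b)} is phrased through the class ${\mathcal{I}}^o_{loc}(N^{\mathbb G},\mathbb G)$. A complete and careful treatment of this point is exactly what \cite{ACJ} and \cite{ChoulliDavelooseVanmaele} provide, and which we quote.
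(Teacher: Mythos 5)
The paper does not prove this theorem at all: it is explicitly \emph{recalled} from \cite[Theorem 3]{ACJ} and \cite[Theorem 2.3 and Theorem 2.11]{ChoulliDavelooseVanmaele}, which is exactly the justification you lead with, so your proposal matches the paper's approach. Your accompanying sketch (dual-projection bookkeeping for (b), Jeulin--Yor decomposition plus the correction on $\{\widetilde G=0<G_{-}\}$ for (a)) is consistent with the arguments in those references and contains no errors I can detect.
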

\noindent For any $q\in [1,+\infty)$ and a $\sigma$-algebra ${\cal H}$ on $\Omega\times [0,+\infty)$, we define
\begin{equation}\label{L1(PandD)Local}
L^q\left({\cal H}, P\otimes {{dD}}\right):=\left\{ X\ {\cal H}\mbox{-measurable}:\quad {E}[\vert X_{\tau}\vert^q I_{\{\tau<+\infty\}}]<+\infty\right\}.\end{equation}
Throughout the paper, we assume the following assumption 
\begin{eqnarray}\label{Assumption4Tau}
 G>0\quad (\mbox{i.e., $G$ is a positive process) and}\quad \tau<+\infty\quad P\mbox{-a.s.}.
\end{eqnarray}
Under the positivity of $G$, this process can be decomposed multiplicatively into two processes, which play central roles in the paper, as follows. 
\begin{lemma}\label{Decomposition4G} If $G>0$, then $\widetilde{G}>0$, $G_{-}>0$, and $G=G_0{\cal{E}}(G_{-}^{-1}\is m){\widetilde{\cal{E}}}$, where
\begin{equation}\label{EpsilonTilde}
\widetilde{\cal{E}}:={\cal E}\left(-{\widetilde{G}}^{-1}\is D^{o,\mathbb{F}}\right).\end{equation}
\end{lemma}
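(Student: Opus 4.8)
The plan is to show that the classical Azéma supermartingale identities together with the positivity of $G$ force both $\widetilde G>0$ and $G_-$ to be strictly positive, and then to recognize $G$ as the unique solution of a linear stochastic differential equation whose solution is given by a product of two Doléans--Dade exponentials. First I would recall that $\widetilde G_t = G_{t-}+\Delta D^{o,\mathbb F}_t$ and $G_t = \widetilde G_t - \Delta m_t$ (equivalently $m = G + D^{o,\mathbb F}$ gives $\Delta m = \Delta G + \Delta D^{o,\mathbb F}$, so $G = \widetilde G - \Delta m$ pathwise). Since $D^{o,\mathbb F}$ is nondecreasing, $\widetilde G \ge G_- $; hence from $G_->0$ we would get $\widetilde G>0$. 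To obtain $G_->0$ from $G>0$: the process $G$ is a nonnegative càdlàg supermartingale, so on $\{G_-=0\}$ it is absorbed at $0$ (a nonnegative supermartingale that hits $0$ stays at $0$), which contradicts $G>0$ on $\Rbrack 0,+\infty\Lbrack$; more carefully, the predictable set $\{G_-=0\}$ has the property that $I_{\{G_-=0\}}\is G$ and the martingale part vanish there, forcing $G=0$ on that set, so $\{G_->0\}$ is indistinguishable from the whole space. Thus $\widetilde G>0$ and $G_->0$, and in particular $1/G_-$ and $1/\widetilde G$ are locally bounded $\mathbb F$-predictable processes, so the stochastic integrals $G_-^{-1}\is m$ and $\widetilde G^{-1}\is D^{o,\mathbb F}$ are well defined.

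Next I would derive the multiplicative decomposition. From $m = G + D^{o,\mathbb F}$ we have $dG = dm - dD^{o,\mathbb F}$ with $G_0$ the starting value. I want to write $G = G_0\,\mathcal E(G_-^{-1}\is m)\,\widetilde{\mathcal E}$ with $\widetilde{\mathcal E} = \mathcal E(-\widetilde G^{-1}\is D^{o,\mathbb F})$. The natural route is integration by parts: set $X := \mathcal E(G_-^{-1}\is m)$, so $dX = X_-\,G_-^{-1}\,dm$, and compute $d(G_0 X \widetilde{\mathcal E})$ using Yor's product formula. Because $\widetilde{\mathcal E}$ has finite variation (it is driven by $D^{o,\mathbb F}$, which is of finite variation) and $\widetilde{\mathcal E} = \widetilde{\mathcal E}_-(1 - \widetilde G^{-1}\Delta D^{o,\mathbb F})$, the bracket term $[X,\widetilde{\mathcal E}]$ reduces to a pure-jump sum, and after substituting $\widetilde G = G_- + \Delta D^{o,\mathbb F}$ and $\Delta m = \Delta G + \Delta D^{o,\mathbb F}$, one checks that the candidate process $\widehat G := G_0 X \widetilde{\mathcal E}$ satisfies the same linear SDE $d\widehat G = d\widehat m - dD^{o,\mathbb F}$ (with the appropriate martingale $\widehat m$ built from $m$) and the same initial condition, so by uniqueness of solutions to such linear equations $\widehat G = G$. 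Alternatively — and this is probably cleaner — I would invoke the standard fact (Jacod, or \cite{ACJ}) that the multiplicative decomposition of a positive supermartintgale into its (local) martingale exponential part times its decreasing exponential part is exactly $G_0\mathcal E(G_-^{-1}\is m^G)\mathcal E(-G_-^{-1}\is A)$ where $G = G_0 + m^G - A$ is the additive decomposition, and then reconcile $-G_-^{-1}\is D^{o,\mathbb F}$ with $-\widetilde G^{-1}\is D^{o,\mathbb F}$ at the level of their Doléans--Dade exponentials by the jump identity $1 - G_-^{-1}\Delta D^{o,\mathbb F}\cdot(\text{...})$, using $\widetilde G = G_-+\Delta D^{o,\mathbb F}$ to pass between the two predictable integrands.

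The main obstacle I anticipate is the bookkeeping of jumps: one must be careful that $\mathcal E(-\widetilde G^{-1}\is D^{o,\mathbb F})$ is strictly positive and well defined, i.e.\ that $1 - \widetilde G^{-1}\Delta D^{o,\mathbb F} = G/\widetilde G \cdot(\text{something})$ never vanishes — indeed $1-\widetilde G^{-1}\Delta D^{o,\mathbb F} = (\widetilde G - \Delta D^{o,\mathbb F})/\widetilde G = G_-/\widetilde G$ when combined with $\widetilde G = G_- + \Delta D^{o,\mathbb F}$, which is strictly positive precisely because $G_->0$ and $\widetilde G>0$. Getting that identity right, and correctly matching the continuous and jump parts on both sides of $G = G_0\mathcal E(G_-^{-1}\is m)\widetilde{\mathcal E}$, is where the real work lies; everything else is a routine application of Yor's formula and the uniqueness of the linear SDE $dX = X_-\,dL$.
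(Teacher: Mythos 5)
First, note that the paper itself gives no proof of this lemma: it is quoted verbatim from \cite[Lemma 2.4]{Choulli1}, so there is no in-paper argument to compare with. Your overall plan --- deduce $G_->0$ by the absorption property of nonnegative supermartingales, deduce $\widetilde{G}>0$, and then identify $G_0{\cal E}(G_{-}^{-1}\is m)\widetilde{\cal E}$ with $G$ through Yor's product formula and uniqueness of the Dol\'eans--Dade equation --- is the standard and correct route, and your argument for $G_->0$ is fine.

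The genuine problem is that the jump identities, which you yourself flag as ``where the real work lies,'' are stated incorrectly, and the product-formula computation does not close with them. The correct identities are $\widetilde{G}=G+\Delta D^{o,\mathbb F}$ (project $I_{\Lbrack0,\tau\Rbrack}=I_{\Lbrack0,\tau\Lbrack}+I_{\Lbrack\tau\Rbrack}$ and use $\Delta(D^{o,\mathbb F})={}^{o,\mathbb F}(\Delta D)$) and, combined with $\Delta G=\Delta m-\Delta D^{o,\mathbb F}$, the key relation $\widetilde{G}=G_{-}+\Delta m$. Your versions $\widetilde{G}=G_{-}+\Delta D^{o,\mathbb F}$ and $G=\widetilde{G}-\Delta m$ are both false in general (the latter computes $G_-$, not $G$), and the inequality $\widetilde{G}\geq G_-$ you derive from them also fails, since $\Delta m$ can be negative; the correct way to get $\widetilde{G}>0$ is simply $\widetilde{G}\geq G>0$. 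Likewise $1-\widetilde{G}^{-1}\Delta D^{o,\mathbb F}=G/\widetilde{G}$, not $G_{-}/\widetilde{G}$; this is the identity $\widetilde{\cal E}=\widetilde{\cal E}_{-}G/\widetilde{G}$ that the paper uses repeatedly. The relation $\widetilde{G}=G_{-}+\Delta m$ is precisely what makes the verification work: with $A:=G_{-}^{-1}\is m$ and $B:=-\widetilde{G}^{-1}\is D^{o,\mathbb F}$, Yor's formula gives ${\cal E}(A){\cal E}(B)={\cal E}(A+B+[A,B])$ with $A+B+[A,B]=G_{-}^{-1}\is m-\widetilde{G}^{-1}G_{-}^{-1}(G_{-}+\Delta m)\is D^{o,\mathbb F}=G_{-}^{-1}\is m-G_{-}^{-1}\is D^{o,\mathbb F}=G_{-}^{-1}\is G$, whence ${\cal E}(A){\cal E}(B)={\cal E}(G_{-}^{-1}\is G)=G/G_0$ by uniqueness. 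With your identity the middle step produces a spurious term $\widetilde{G}^{-1}\Delta G\is D^{o,\mathbb F}$ and the two sides do not match. So: right architecture, but the computation as written would fail; replace the jump identities by $\widetilde{G}=G+\Delta D^{o,\mathbb F}=G_{-}+\Delta m$ and the proof goes through.
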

For more details about this lemma and related results, we refer the reader to \cite[Lemma 2.4]{Choulli1}. Below, we recall \cite[Proposition 4.3]{Choulli1} that will play important role throughout the paper.
\begin{proposition}Suppose that $G>0$ and consider the process
 \begin{equation}\label{Ztilde}
\widetilde{Z}:=1/{\cal E}(G_{-}^{-1}\is{m}).
\end{equation}
Then the following assertions hold.\\
{\rm{(a)}} $\widetilde{Z}^{\tau}$ is a $\mathbb G$-martingale, and for any $T\in (0,+\infty)$, $\widetilde{Q}_T$ given by 
 \begin{equation}\label{Qtilde}
 \frac{d{\widetilde{Q}_T}}{dP}:=\widetilde{Z}_{T\wedge\tau}.
\end{equation}
is well defined probability measure on ${\cal G}_{\tau\wedge T}$.\\
{\rm{(b)}} For any  $M\in {\cal M}_{loc}(\mathbb F)$, we have $M^{T\wedge \tau}\in {\cal M}_{loc}(\mathbb G, \widetilde{Q}_T)$.  In particular $W^{T\wedge\tau}$ is a Brownian motion for $(\widetilde{Q}_T, \mathbb G)$, for any $T\in (0,+\infty)$.
\end{proposition}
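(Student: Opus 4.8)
The plan is to prove assertion (a) first --- strict positivity of $\widetilde Z^{\tau}$, its $\mathbb G$-martingale property, and the fact that each $\widetilde Q_{T}$ is a bona fide probability --- and then to read off (b) from (a) via a Girsanov-type argument combined with the operator $\mathcal T$ of Theorem~\ref{Toperator}. I would start from the elementary reductions: by Lemma~\ref{Decomposition4G}, $G>0$ forces $\widetilde G>0$ and $G_{-}>0$, and from $m=G+D^{o,\mathbb F}$ one reads off $\widetilde G=G_{-}+\Delta m$, so $1+\Delta(G_{-}^{-1}\is m)=\widetilde G/G_{-}>0$; hence $Y:=\mathcal E(G_{-}^{-1}\is m)$ is a strictly positive $\mathbb F$-local martingale with strictly positive left limits, $\widetilde Z=1/Y$ is a well-defined strictly positive $\mathbb F$-semimartingale with $\widetilde Z_{0}=1$, and the multiplicative decomposition of Lemma~\ref{Decomposition4G} reads $\widetilde Z=G_{0}\widetilde{\mathcal E}/G$.

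For the local-martingale half of (a), I would apply $\mathcal T$ to $M=m$: since $\widetilde G>0$, the last term in \eqref{processMhat} vanishes, so $\widehat m:=\mathcal T(m)=m^{\tau}-\widetilde G^{-1}I_{\Rbrack 0,\tau\Rbrack}\is[m]$ is a $\mathbb G$-local martingale. Using the reciprocal formula for stochastic exponentials together with $m^{\tau}=\widehat m+\widetilde G^{-1}I_{\Rbrack 0,\tau\Rbrack}\is[m]$ and $I_{\Rbrack 0,\tau\Rbrack}\is D^{o,\mathbb F}=\widetilde G\is(D-N^{\mathbb G})$ (from Theorem~\ref{Toperator}(b)), I would verify via Itô's formula on $\mathbb G$ that $\widetilde Z^{\tau}$ solves $d\widetilde Z^{\tau}=\widetilde Z^{\tau}_{-}\,dL^{\mathbb G}$, $\widetilde Z^{\tau}_{0}=1$, for an explicit $\mathbb G$-local martingale $L^{\mathbb G}$ assembled from $\widehat m$ and $N^{\mathbb G}$ (when $\mathbb F$ is continuous, simply $L^{\mathbb G}=-G_{-}^{-1}\is\widehat m$), whence $\widetilde Z^{\tau}=\mathcal E(L^{\mathbb G})\in\mathcal M_{loc}(\mathbb G)$ and, being strictly positive, a $\mathbb G$-supermartingale. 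To upgrade this to a genuine martingale --- equivalently, $E[\widetilde Z_{\tau\wedge T}]=1$ for every finite $T$ --- I would split on $\{\tau\le T\}\cup\{\tau>T\}$: on $\{\tau>T\}$, conditioning on $\mathcal F_{T}$ and using $P(\tau>T\mid\mathcal F_{T})=G_{T}$ with $\widetilde Z=G_{0}\widetilde{\mathcal E}/G$ gives $E[\widetilde Z_{T}I_{\{\tau>T\}}]=E[G_{0}\widetilde{\mathcal E}_{T}]$; on $\{\tau\le T\}$, the $\mathbb F$-dual optional projection gives $E[\widetilde Z_{\tau}I_{\{\tau\le T\}}]=E[\int_{[0,T]}\widetilde Z_{s}\,dD^{o,\mathbb F}_{s}]$, which through the identity $\widetilde Z_{s}\,dD^{o,\mathbb F}_{s}=-G_{0}\,d\widetilde{\mathcal E}_{s}$ (valid for $s>0$) telescopes to $1-E[G_{0}\widetilde{\mathcal E}_{T}]$; adding the two yields $1$. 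Hence $\widetilde Z^{\tau}$ is a $\mathbb G$-martingale, and for each finite $T$, $d\widetilde Q_{T}/dP=\widetilde Z_{\tau\wedge T}$ defines a probability on $\mathcal G_{\tau\wedge T}$, equivalent to $P$ there since $\widetilde Z^{\tau}>0$.

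For (b), once $\widetilde Z^{\tau}$ is the density process of $\widetilde Q_{T}$ relative to $P$ on $\Lbrack 0,\tau\wedge T\Rbrack$, a $\mathbb G$-adapted process $X$ is a $(\widetilde Q_{T},\mathbb G)$-local martingale there iff $X\widetilde Z^{\tau}$ is a $(P,\mathbb G)$-local martingale; so it suffices to show $M^{\tau}\widetilde Z^{\tau}\in\mathcal M_{loc}(\mathbb G)$ for every $M\in\mathcal M_{loc}(\mathbb F)$. I would establish this by integration by parts: inserting $d\widetilde Z^{\tau}=\widetilde Z^{\tau}_{-}\,dL^{\mathbb G}$ and $M^{\tau}=\mathcal T(M)+\widetilde G^{-1}I_{\Rbrack 0,\tau\Rbrack}\is[M,m]$ (Theorem~\ref{Toperator}(a)) into $d(M^{\tau}\widetilde Z^{\tau})=\widetilde Z^{\tau}_{-}\,dM^{\tau}+M^{\tau}_{-}\widetilde Z^{\tau}_{-}\,dL^{\mathbb G}+\widetilde Z^{\tau}_{-}\,d[M^{\tau},L^{\mathbb G}]$, the two finite-variation contributions --- the one carried by the $\mathcal T$-correction $\widetilde G^{-1}I_{\Rbrack 0,\tau\Rbrack}\is[M,m]$ and the one emerging from the $\langle\mathcal T(M),\widehat m\rangle$-type terms inside $d[M^{\tau},L^{\mathbb G}]$ --- cancel, which is precisely what the bracket correction in the definition of $\mathcal T$ is built to achieve; what remains is a sum of stochastic integrals against $\mathcal T(M)$, $\widehat m$ and $N^{\mathbb G}$, hence a $\mathbb G$-local martingale. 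Specializing to $\mathbb F$ generated by $W$ and taking $M=W$, we obtain $W^{T\wedge\tau}\in\mathcal M_{loc}(\mathbb G,\widetilde Q_{T})$; since an equivalent change of measure leaves the quadratic variation unchanged, $[W^{T\wedge\tau}]_{t}=t\wedge T\wedge\tau$, and Lévy's characterization identifies $W^{T\wedge\tau}$ as a $(\widetilde Q_{T},\mathbb G)$-Brownian motion stopped at $T\wedge\tau$.

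The step I expect to be the main obstacle is the passage from ``local martingale'' to ``true martingale'' for $\widetilde Z^{\tau}$ in (a): a BMO argument based on the fact that $m$ is a BMO $\mathbb F$-martingale yields only the local statement, since $G_{-}^{-1}$ need not be bounded on $\Lbrack 0,\tau\wedge T\Rbrack$, so one is pushed to the explicit expectation identity above, where care is needed with the time-$0$ atom of $D^{o,\mathbb F}$ (namely $\Delta D^{o,\mathbb F}_{0}=1-G_{0}$) and with the convention that $\mathcal E(\cdot)$ does not register a jump at the origin. The other delicate point is the precise bookkeeping of the jump terms $\Delta M\,\Delta m$, $\widetilde G$, $G_{-}$ through $\mathcal T$ and $\mathcal E$, both in the identification of $L^{\mathbb G}$ and in the cancellation in (b); this is routine but error-prone, and the continuous-filtration case --- which is all that the ``in particular'' statement about $W$ needs --- is substantially cleaner, since there $L^{\mathbb G}=-G_{-}^{-1}\is\widehat m$ and every bracket is a sharp bracket.
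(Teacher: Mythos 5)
Your argument is correct, but there is nothing in the paper to compare it against: the proposition is not proved here, it is recalled verbatim from \cite[Proposition 4.3]{Choulli1}. Judged on its own merits, your reconstruction is sound, and it in fact recovers exactly the two identities that the paper later uses without comment: the identification $\widetilde Z^{\tau}={\cal E}(-G_{-}^{-1}\is{\cal T}(m))$ appears inside the proof of Lemma \ref{L/EpsilonTilde}, and your expectation computation is precisely the splitting $H_{T\wedge\tau}=H_{\tau}I_{\{0<\tau\le T\}}+H_{T}I_{\{\tau>T\}}+H_{0}I_{\{\tau=0\}}$ together with $\widetilde Z\,dD^{o,\mathbb F}=-G_{0}\,d\widetilde{\cal E}$ carried out in the proof of Lemma \ref{ExpecationQtilde2Pbis}, so the two contributions $E[G_{0}\widetilde{\cal E}_{T}]$ and $1-E[G_{0}\widetilde{\cal E}_{T}]$ do sum to $1$ and upgrade the positive $\mathbb G$-supermartingale $\widetilde Z^{\tau}$ to a true martingale.

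Two refinements. First, your hedge about the form of $L^{\mathbb G}$ when $\mathbb F$ is not continuous is unnecessary: writing $1/{\cal E}(X)={\cal E}\bigl(-X+\langle X^{c}\rangle+\sum(\Delta X)^{2}/(1+\Delta X)\bigr)$ with $X=G_{-}^{-1}\is m$ and $1+\Delta X=\widetilde G/G_{-}$, the compensating finite-variation terms combine with $-G_{-}^{-1}\widetilde G^{-1}I_{\Rbrack0,\tau\Rbrack}\is[m,m]$ coming from $m^{\tau}={\cal T}(m)+\widetilde G^{-1}I_{\Rbrack0,\tau\Rbrack}\is[m,m]$ to give exactly zero --- the residual continuous-bracket term $(G_{-}^{-2}-G_{-}^{-1}\widetilde G^{-1})\is\langle m^{c}\rangle$ vanishes because $\langle m^{c}\rangle$ does not charge the thin set $\{\Delta m\neq0\}$, which is the only place where $\widetilde G\neq G_{-}$ --- so $L^{\mathbb G}=-G_{-}^{-1}\is{\cal T}(m)$ in full generality and no $N^{\mathbb G}$ component is needed. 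Second, the cancellation you invoke in (b) reduces to the identity $(\widetilde G^{-1}-G_{-}^{-1})\is[M,m]^{\tau}+\sum(\Delta m)^{2}\Delta M/(G_{-}\widetilde G)=0$ on $\Rbrack0,\tau\Rbrack$, which holds by the same two observations ($\widetilde G^{-1}-G_{-}^{-1}=-\Delta m/(G_{-}\widetilde G)$ and the continuous part of $[M,m]$ not charging $\{\Delta m\neq 0\}$); spelling this out would close the only step you left as a claim.
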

\begin{remark} {{1)} Under the condition $G>0$, we get ${\cal T}(M) := M^{\tau} -{\widetilde{G}}^{-1} I_{\Rbrack 0,\tau\Rbrack} \is [M,m]$ for any $M\in{\cal{M}}_{loc}(\mathbb{F})$. This due to the fact that when $G>0$, we get $\widetilde{G}>0$ thanks to Lemma \ref{Decomposition4G}. Thus, the process $I_{\{\widetilde{G}=0>G_{-}\}}$ is null, and as a consequence $ I_{\Rbrack 0,\tau\Rbrack} \is\Big(\sum \Delta M I_{\{\widetilde G=0<G_{-}\}}\Big)^{p,\mathbb F}=0$.\\
2)} In general, the $\mathbb G$-martingale $\widetilde{Z}^{\tau}$ might not be uniformly integrable, and hence in general $\widetilde{Q}_T$ might not be well defined for $T=\infty$. For these facts, we refer the reader to \cite[Proposition 4.3]{Choulli1}, where conditions for $\widetilde{Z}^{\tau}$  being uniformly integrable are fully singled out when $G>0$. 
\end{remark}
\section{The optimal stopping problem under random horizon}

Throughout the rest of the paper,  $\mathcal{J}_{\sigma_1}^{\sigma_2}(\mathbb{H})$ denotes the set of all $\mathbb{H}$-stopping times with values in $\Lbrack
\sigma_1,\sigma_2\Rbrack$, for any two $\mathbb H$-stopping times $\sigma_1$ and  $\sigma_2$  such that $\sigma_1\leq\sigma_2$ $P$-a.s.. This section has three subsections. The first subsection connects the $\mathbb{G}$-reward process to an $\mathbb{F}$-reward process in a unique manner, and investigates how their integrability is transmitted back and forth. The second subsection elaborates the mathematical structures results induced by $\tau$. The third subsection connects the minimal and maximal $\mathbb{G}$-optimal stopping times to the corresponding  $\mathbb{F}$-optimal stopping times.
\subsection{Parametrization  of $\mathbb{G}$-reward using $\mathbb{F}$-processes}
This subsection establishes exact relationship between $\mathbb{G}$-reward and $\mathbb{F}$-reward processes, and shows how some features travel back and forth between them. To this end, we recall the notion of class-D-processes.
\begin{definition}Let $(X,\mathbb{H})$ be a pair of a process $X$ and a filtration $\mathbb{H}$. Then $X$ is said to be of class-$(\mathbb{H},{\cal{D}})$ if $\{X_{\sigma}:\ \sigma\ \mbox{is a finite $\mathbb{H}$-stopping time}\}$ is a uniformly integrable family of random variables .  
\end{definition}
Below we state our main results of this subsection. {{To this end, throughout the rest of the paper, we consider the following notation
\begin{equation}
\sigma_1\wedge\sigma_2:=\min(\sigma_1,\sigma_2),\quad\mbox{for any two real-valued random variables $\sigma_1$ and $\sigma_2$}.
\end{equation}}}
\begin{theorem}\label{PropositionG2F}Suppose (\ref{Assumption4Tau}) holds, and let $X^{\mathbb G}$ be a $\mathbb G$-optional process such that $(X^{\mathbb G})^{\tau}=X^{\mathbb G}$. Then there exists a pair $(X^{\mathbb F}, k^{(pr)})$ of processes such that $X^{\mathbb F}$ is $\mathbb F$-optional, $k^{(pr)}$ is $\mathbb F$-progressive, 
\begin{eqnarray}\label{EqualityG2F}
X^{\mathbb G}=k^{(pr)}_0{I}_{\{\tau=0\}}+X^{\mathbb F} I_{\Lbrack0,\tau\Lbrack}+k^{(pr)}\is{D}\quad\mbox{and}\quad X^{\mathbb F} ={^{o,\mathbb{F}}}(X^{\mathbb G} I_{\Lbrack0,\tau\Lbrack})/G.
\end{eqnarray}
The pair $(X^{\mathbb F}, k^{(pr)})$  is unique in the sense that if there exists another pair $(\overline{X}^{\mathbb F}, \overline{k}^{(pr)})$ satisfying (\ref{EqualityG2F}), then $X^{\mathbb F}$ and $\overline{X}^{\mathbb F}$ are modifications of each other and $\overline{k}^{(pr)}={k}^{(pr)}$ $P\otimes{ {{dD}}}$-a.e..\\ Furthermore,  the following assertions hold.\\
{\rm{(a)}} $X^{\mathbb G}$ is locally bounded if and only if $X^{\mathbb F}$  and $ k^{(pr)}$ are is  locally bounded.\\
{\rm{(b)}}  $X^{\mathbb G}$ is RCLL if and only if $X^{\mathbb F}$ is RCLL.\\
{\rm{(c)}} $X^{\mathbb G}$ is a RCLL $\mathbb G$-semimartingale if and only if $X^{\mathbb F}$ is a RCLL $\mathbb F$-semimartingale. Furthermore, 
\begin{eqnarray}\label{Decompo4XG}
X^{\mathbb G}=(X^{\mathbb F})^{\tau}+(k^{(pr)}-X^{\mathbb F})\is{D}+(X^{\mathbb F}_0-k^{(pr)}_0)I_{\{\tau=0\}}.
\end{eqnarray}
{\rm{(d)}} For any $q\in [1,\infty)$, $E\left[\sup_{t\geq 0} \vert X^{\mathbb G}_{t}\vert^q\right]<\infty$ if and only if 
\begin{equation}
k^{(pr)}\in  L^q\left(\widetilde\Omega, {\rm{Prog}}(\mathbb F), P\otimes {{dD}}\right)\ \mbox{and}\ \sup_{0\leq s< \cdot}\vert X^{\mathbb F}_s\vert^q\is{D}^{o,\mathbb F}\in{\cal{A}}^+(\mathbb{F}).\end{equation}
{\rm{(e)}} If $X^{\mathbb G}$ is of class-$(\mathbb{G},{\cal{D}})$, then $k^{(pr)}\in  L^1\left(\widetilde\Omega, {\rm{Prog}}(\mathbb F), P\otimes {{dD}}\right)$ and $X^{\mathbb F}G$ is of class-$(\mathbb{F},{\cal{D}})$.
\end{theorem}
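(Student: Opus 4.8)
The plan is to start from the well-known structure of $\mathbb{G}$-optional processes stopped at $\tau$. Recall (see e.g. Jeulin, or \cite{ACJ}) that any $\mathbb{G}$-optional process $Y$ admits a representation $Y = Y^{(1)} I_{\Lbrack 0,\tau\Lbrack} + Y^{(2)} I_{\Lbrack \tau, +\infty\Lbrack}$, where $Y^{(1)}$ is $\mathbb{F}$-optional and $Y^{(2)}$ is measurable with respect to the $\sigma$-field generated by $\tau$ and $\mathbb{F}$-progressive processes evaluated at $\tau$; the latter can be written as $k^{(pr)}_\tau$ for an $\mathbb{F}$-progressive process $k^{(pr)}$. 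Under the stopping hypothesis $(X^{\mathbb G})^\tau = X^{\mathbb G}$, this gives exactly $X^{\mathbb G} = X^{\mathbb F} I_{\Lbrack 0,\tau\Lbrack} + k^{(pr)}\is D$ with $X^{\mathbb F} := Y^{(1)}$. To pin down the formula $X^{\mathbb F} = {}^{o,\mathbb F}(X^{\mathbb G} I_{\Lbrack 0,\tau\Lbrack})/G$ and uniqueness, I would take the $\mathbb F$-optional projection of $X^{\mathbb G} I_{\Lbrack 0,\tau\Lbrack} = X^{\mathbb F} I_{\Lbrack 0,\tau\Lbrack}$: since $X^{\mathbb F}$ is $\mathbb F$-optional, ${}^{o,\mathbb F}(X^{\mathbb F} I_{\Lbrack 0,\tau\Lbrack}) = X^{\mathbb F}\, {}^{o,\mathbb F}(I_{\Lbrack 0,\tau\Lbrack}) = X^{\mathbb F} G$, and dividing by $G>0$ (legitimate by (\ref{Assumption4Tau})) yields the claim. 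Uniqueness of $X^{\mathbb F}$ follows, and then $k^{(pr)}\is D$ is determined, whence $k^{(pr)}$ is unique up to $P\otimes dD$-null sets (equivalently, $k^{(pr)}_\tau$ is unique $P$-a.s.\ on $\{\tau<\infty\}$).

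For part (a): on $\Lbrack 0,\tau\Lbrack$ the processes $X^{\mathbb G}$ and $X^{\mathbb F}$ coincide, so right-continuity and existence of left limits of one on the stochastic interval $\Lbrack 0,\tau\Lbrack$ is equivalent to that of the other there; since $G>0$ forces $\tau$ to be the time of a jump of $D$ but not an $\mathbb F$-predictable time in general, one uses that $\mathbb F$-optional processes agreeing with a càdlàg process on $\Lbrack 0,\tau\Lbrack$ for "enough" $\tau$ (here, because $G>0$, the graph of $\tau$ is rich enough to separate) must themselves be càdlàg — more cleanly, $X^{\mathbb F}$ is càdlàg iff $X^{\mathbb F} I_{\Lbrack 0,\tau\Lbrack}$ is the restriction of a càdlàg $\mathbb G$-adapted process, which is $X^{\mathbb G}$. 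For part (b), assuming $X^{\mathbb F}$ is an $\mathbb F$-semimartingale, the process $X^{\mathbb F}$ is in particular a $\mathbb G$-semimartingale on $\Lbrack 0,\tau\Lbrack$ after the enlargement (the optional semimartingale decomposition under $\mathbb G$ is stable on $\Lbrack 0,\tau\Lbrack$ since $\widetilde G>0$ there), and one computes $(X^{\mathbb G})^\tau = (X^{\mathbb F} I_{\Lbrack 0,\tau\Lbrack} + k^{(pr)}\is D)$; rewriting $X^{\mathbb F} I_{\Lbrack 0,\tau\Lbrack} = X^{\mathbb F} - X^{\mathbb F}_{-}I_{\Lbrack\tau,\infty\Lbrack} - \Delta X^{\mathbb F} I_{\Lbrack\tau,\infty\Lbrack}$... actually more simply $X^{\mathbb F}I_{\Lbrack 0,\tau\Lbrack} = (X^{\mathbb F})^\tau - X^{\mathbb F}_\tau I_{\Lbrack\tau,\infty\Lbrack} = (X^{\mathbb F})^\tau - X^{\mathbb F}\is D$, so that $(X^{\mathbb G})^\tau = (X^{\mathbb F})^\tau + (k^{(pr)} - X^{\mathbb F})\is D$, which is (\ref{Decompo4XG}); the converse direction follows because $D$ has finite variation and $(X^{\mathbb F})^\tau = (X^{\mathbb G})^\tau - (k^{(pr)}-X^{\mathbb F})\is D$ exhibits $(X^{\mathbb F})^\tau$ as a $\mathbb G$-semimartingale, from which one descends to an $\mathbb F$-semimartingale property via the projection identity and an optional-decomposition argument.

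For (c) and (d), the key identity is a "disintegration at $\tau$": for a nonnegative $\mathbb F$-optional (or progressive) process $\phi$, $E[\phi_\tau I_{\{\tau<\infty\}}] = E[\int_0^\infty \phi_s\, dD^{o,\mathbb F}_s] = E[\phi \is D^{o,\mathbb F}_\infty]$. Applying this with $\phi = \sup_{0\le s<\cdot}|X^{\mathbb F}_s|^q$ handles the barrier term, while the contribution of $k^{(pr)}\is D$ to $\sup_t |X^{\mathbb G}_t|^q$ is exactly $|k^{(pr)}_\tau|^q I_{\{\tau<\infty\}}$, giving the stated $L^q(\mathrm{Prog}(\mathbb F), P\otimes D)$ condition; conversely $\sup_t|X^{\mathbb G}_t|^q \le 2^{q-1}(\sup_{s<\tau}|X^{\mathbb F}_s|^q + |k^{(pr)}_\tau|^q I_{\{\tau<\infty\}})$ and $\ge$ each piece separately up to constants, so the two-sided bound is routine. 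For (d), class-$(\mathbb G,\mathcal D)$ of $X^{\mathbb G}$ applied to the stopping time $\sigma = \tau$ (finite $P$-a.s.) gives $E|X^{\mathbb G}_\tau| = E|k^{(pr)}_\tau| <\infty$, i.e.\ $k^{(pr)}\in L^1(\mathrm{Prog}(\mathbb F),P\otimes D)$; and for any finite $\mathbb F$-stopping time $\vartheta$, $E[|X^{\mathbb F}_\vartheta| G_\vartheta] = E[\,{}^{o,\mathbb F}(X^{\mathbb G}I_{\Lbrack 0,\tau\Lbrack})_\vartheta \cdot \mathrm{sgn}] \le E[|X^{\mathbb G}_\vartheta| I_{\{\vartheta<\tau\}}]$, hmm — more carefully, $X^{\mathbb F}_\vartheta G_\vartheta = E[X^{\mathbb G}_\vartheta I_{\{\vartheta<\tau\}}\mid\mathcal F_\vartheta]$ so $|X^{\mathbb F}_\vartheta G_\vartheta|\le E[|X^{\mathbb G}_\vartheta|\mid\mathcal F_\vartheta]$ and uniform integrability of $\{X^{\mathbb F}_\vartheta G_\vartheta\}$ follows from that of $\{X^{\mathbb G}_\vartheta\}$ and of conditional expectations. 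I expect the main obstacle to be the careful handling of the semimartingale equivalence in part (b) — specifically, showing that $(X^{\mathbb F})^\tau$ being a $\mathbb G$-semimartingale forces $X^{\mathbb F}$ to be an $\mathbb F$-semimartingale (not just on $\Lbrack 0,\tau\Lbrack$), which requires exploiting $G>0$ to transfer the decomposition from $\mathbb G$ back to $\mathbb F$ and may need a reference to the projection theory for semimartingales under shrinking of filtration.
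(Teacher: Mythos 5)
Your overall strategy coincides with the paper's: the Jeulin--Aksamit decomposition of $\mathbb G$-optional processes stopped at $\tau$, the identity $X^{\mathbb F}G={}^{o,\mathbb F}(X^{\mathbb G}I_{\Lbrack0,\tau\Lbrack})$ together with $G>0$ for uniqueness, the pathwise identity $X^{\mathbb F}I_{\Lbrack0,\tau\Lbrack}=(X^{\mathbb F})^{\tau}-X^{\mathbb F}\is D$ for (\ref{Decompo4XG}), the disintegration $E[\phi_{\tau}I_{\{\tau<\infty\}}]=E[(\phi\is D^{o,\mathbb F})_{\infty}]$ for assertion (c), and the conditional-expectation domination $\vert X^{\mathbb F}_{\vartheta}\vert G_{\vartheta}\leq E[\vert X^{\mathbb G}_{\vartheta}\vert\,\vert\,{\cal F}_{\vartheta}]$ for assertion (d). Those parts are sound and match the paper.

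The genuine gap is in the regularity and semimartingale transfers of (a) and the converse direction of (b), which you flag but do not close. For (a), the claim that an $\mathbb F$-optional process agreeing with an RCLL process on $\Lbrack0,\tau\Lbrack$ must itself be RCLL because ``the graph of $\tau$ is rich enough'' is not an argument, and the biconditional you offer in its place assumes the direction that needs proof. The working mechanism is: localize $X^{\mathbb G}$ by $T^{\mathbb G}_n:=\inf\{t\geq 0:\ \vert X^{\mathbb G}_t\vert>n\}$, use the associated $\mathbb F$-stopping times $T_n$ with $T^{\mathbb G}_n\wedge\tau=T_n\wedge\tau$, and invoke the fact that the $\mathbb F$-optional projection of a \emph{bounded} RCLL process is RCLL (Dellacherie--Meyer, Th\'eor\`eme 47); this makes $X^{\mathbb F}G$ RCLL, and dividing by the positive RCLL process $G$ finishes. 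For the converse of (b), your proposed route --- exhibit $(X^{\mathbb F})^{\tau}=(X^{\mathbb G})^{\tau}-(k^{(pr)}-X^{\mathbb F})\is D$ as a $\mathbb G$-semimartingale and ``descend'' --- stalls because $(X^{\mathbb F})^{\tau}$ is not $\mathbb F$-adapted, so a Stricker-type shrinkage theorem does not apply to it directly. The object that should be projected is $X^{\mathbb G}I_{\Lbrack0,\tau\Lbrack}$: after the same localization, the $\mathbb F$-optional projection of a bounded RCLL $\mathbb G$-semimartingale is an RCLL $\mathbb F$-semimartingale (Th\'eor\`eme 59), so $X^{\mathbb F}G$ is an $\mathbb F$-semimartingale, and one concludes by It\^o with $G>0$. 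In short, both missing steps are supplied by the same ingredient: the projection theorems for bounded RCLL processes and semimartingales, applied after the $T^{\mathbb G}_n$ localization.
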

{{The local boundedness of the process $k^{(pr)}$, which is defined up to a $P\otimes{dD}$-evanescent set, is understood in the sense that there exist a sequence of $\mathbb{F}$-stopping times $(T_n)_{n\geq 1}$ which increases to infinity almost surely  and $\vert{k}^{(pr)}\vert{I}_{\Lbrack0,T_n\Rbrack}\leq C_n$, $P\otimes{dD}$-a.e. for all $n\geq 1$ for some $C_n\in (0,\infty)$, or equivalently $\vert{k}^{(pr)}_{\tau}\vert{I}_{\{\tau\leq{T}_n\}}\leq C_n$, $P$-a.s..}}
\begin{proof}[Proof of Theorem \ref{PropositionG2F}] Consider a $\mathbb G$-optional process $X^{\mathbb G}$. Then thanks to  \cite[Lemma B.1]{Aksamit} (see also  \cite[Lemma 4.4]{Jeulin1980}), there exists a pair $(X^{\mathbb F}, k^{(pr)})$ such that $X^{\mathbb F}$ is an $\mathbb F$-optional, $k^{(pr)}$ is $\mbox{Prog}(\mathbb F)$-measurable,  
\begin{equation}\label{Existence}
X^{\mathbb G}I_{\Lbrack0,\tau\Lbrack}=X^{\mathbb F}I_{\Lbrack0,\tau\Lbrack},\quad\mbox{and}\quad X^{\mathbb G}_{\tau}=k^{(pr)}_{\tau},\quad\quad P\mbox{-a.s.}.
\end{equation}
Furthermore, on the one hand, the uniqueness of this pair follows from the assumption $G>0$ and the second equality in (\ref{Existence}). On the other hand, $X^{\mathbb G}=(X^{\mathbb G})^{\tau}$ yields
\begin{equation*}
X^{\mathbb G}=X^{\mathbb G}I_{\Lbrack0,\tau\Lbrack}+X^{\mathbb G}_{\tau} I_{\Lbrack\tau,+\infty\Lbrack}=X^{\mathbb F}I_{\Lbrack0,\tau\Lbrack}+k^{(pr)}\is{D}+k^{(pr)}_0{I}_{\{\tau=0\}},
\end{equation*}
and the equality (\ref{EqualityG2F}) is proved. \\
a) In virtue of (\ref{Existence}), it is clear the local boundedness of the pair $(X^{\mathbb F}, k^{(pr)})$ implies the local boundedness of $X^{\mathbb G}$. To prove the reverse, we assume that $X^{\mathbb G}$ is locally bounded and $(T^{\mathbb{G}}_n)_n$ is the localizing sequence of stopping times. Hence, there exists a sequence of positive constants $(C_n)_n$ such that 
\begin{equation}\label{Bound4X(G)}
\esssup_{{t}\geq0}\vert{X}^{\mathbb G}_{{t}\wedge{T}^{\mathbb{G}}_n}\vert\leq{C}_n,\quad\quad P\mbox{-a.s..}
\end{equation}
 Then there exists a sequence of $\mathbb{F}$-stopping times $(T_n)_n$ that increases to infinity almost surely and $\min(\tau,T_n)=\min(T^{\mathbb{G}}_n,\tau)$ $P$-a.s. In virtue of the assumption $(X^{\mathbb G})^{\tau}=X^{\mathbb G}$ and (\ref{Existence}), the inequality (\ref{Bound4X(G)}) is equivalent to 
 \begin{equation*}
  \max\left( \esssup_{0\leq{t}<\tau}\vert{X}^{\mathbb F}_{{t}\wedge{T}_n}\vert,\vert{X}^{\mathbb G}_{\tau\wedge{T}_n}\vert\right)=\max\left( \esssup_{0\leq{t}<\tau}\vert{X}^{\mathbb G}_{{t}\wedge{T}^{\mathbb{G}}_n}\vert,\vert{X}^{\mathbb G}_{\tau\wedge{T}^{\mathbb{G}}_n}\vert\right)\leq C_n.
 \end{equation*}
Hence this inequality implies $\vert{k}^{(pr)}_{\tau}\vert{I}_{\{\tau\leq{T}_n\}}\leq \vert{X}^{\mathbb G}_{\tau\wedge{T}_n}\vert\leq C_n$ and $ \esssup_{0\leq{t}<\tau}\vert{X}^{\mathbb F}_{{t}\wedge{T}_n}\vert\leq{C}_n$ $P$-a.s., or equivalently $ {k}^{(pr)}I_{\Lbrack0,T_n\Rbrack}$ and $({X}^{\mathbb F})^{T_n}I_{\Lbrack0,\tau\Lbrack}$ are bounded by $C_n$. Thanks to $G>0$ and the fact that $T_n$  is an $\mathbb{F}$-stopping time that increases to infinity, these latter conditions are equivalent to say that both ${k}^{(pr)}$ and ${X}^{\mathbb F}$ are locally bounded. This proves assertion (a).\\
b) Thanks to (\ref{EqualityG2F}) and the fact that $k^{(pr)}\is{D}$ is a RCLL process, we deduce that $X^{\mathbb G}$ is RCLL if and only if $X^{\mathbb F} I_{\Lbrack0,\tau\Lbrack}$ is RCLL. Thus, we suppose $X^{\mathbb G}$ being a RCLL process, and we consider the sequence of $\mathbb G$-stopping times  $(T^{\mathbb G}_n)$ given by 
\begin{eqnarray*}
T_n^{\mathbb{G}}:=\inf\left\{t\geq 0:\ \vert{X}^{\mathbb G}_t\vert>n\right\},\ \mbox{and satisfies}\  \vert{X}^{\mathbb G,n}\vert\leq n,\ X^{\mathbb G,n}:= X^{\mathbb G}{I}_{\Lbrack0,T_n^{\mathbb{G}}\Lbrack}.
\end{eqnarray*}
It is clear that $T_n^{\mathbb{G}}$ increases to infinity, and by virtue of  \cite[Proposition B.2-(b)]{Aksamit} and $G>0$, there exists a sequence of $\mathbb F$-stopping times $(T_n)_n$ which increases to infinity  and satisfies $T_n^{\mathbb{G}}\wedge\tau=T_n\wedge\tau$. Furthermore, by applying (\ref{EqualityG2F})  to each ${X}^{\mathbb G,n}$, on the one hand, we get
 \begin{eqnarray*}
 {X}^{\mathbb G,n}I_{\Lbrack0,\tau\Lbrack}=X^{\mathbb F}I_{\Lbrack0,\tau\Lbrack}I_{\Lbrack0,T_n\Lbrack}.
 \end{eqnarray*}
 On the other hand, as $T_n$ increases to infinity, it is clear that $ X^{\mathbb F}$ is a RCLL if and only if $X^{\mathbb F}I_{\Lbrack0,T_n\Lbrack}={^{o,\mathbb{F}}}( {X}^{\mathbb G,n}I_{\Lbrack0,\tau\Lbrack})G^{-1}$ is RCLL. This latter fact follows directly from combining the boundedness of $ {X}^{\mathbb G,n}$, \cite[Th\'eor\`eme 47, pp: 119]{DellacherieMeyer80}, and the right-continuity of $G$. This completes the proof of assertion (b).\\
c) It is clear that $k^{(pr)}\is{ D}$ is a RCLL $\mathbb G$-semimartingale, and hence $X^{\mathbb G}$ is a RCLL $\mathbb G$-semimartingale if and only if $X^{\mathbb F}I_{\Lbrack0,\tau\Lbrack}$ is a RCLL $\mathbb G$-semimartingale. Thus, if $X^{\mathbb{F}}$ is a RCLL $\mathbb F$-semimartingale, then $X^{\mathbb G}$ is a RCLL $\mathbb G$-semimartingale. To prove the converse, we remark that by stopping with $T^{\mathbb G}_n$ defined above and by using \cite[Th\'eor\`eme 26, Chapter VII, pp: 235]{DellacherieMeyer80}, there is no loss of generality in assuming  $X^{\mathbb G}$ is bounded, which leads to the boundedness of $X^{\mathbb F}$, see  \cite[Lemma B.1]{Aksamit}  or  \cite[Lemma 4.4 (b), pp: 63]{Jeulin1980}. Thus, thanks to \cite[Th\'eor\`eme 47, pp: 119 and Th\'eor\`eme 59, pp: 268]{DellacherieMeyer80} which implies that the optional projection of a bounded RCLL $\mathbb G$-semimartingale is a RCLL $\mathbb F$-semimartingale, we deduce that $ X^{\mathbb F}G=^{o,\mathbb F}\left( X^{\mathbb{G}}I_{\Lbrack0,\tau\Lbrack}\right)$ is a RCLL $\mathbb F$-semimartingale. A combination of this with the condition $G>0$ and the fact that $G$ is a RCLL $\mathbb F$-semimartingale implies that $ X^{\mathbb F}$ is a RCLL $\mathbb F$-semimartingale. Furthermore, direct calculation yields 
 \begin{eqnarray*}\label{equalityG2Fbis}
 X^{\mathbb F}I_{\Lbrack0,\tau\Lbrack}=( X^{\mathbb F})^{\tau}- X^{\mathbb F}\is{D}-X^{\mathbb F}_0{I}_{\{\tau=0\}}\quad\mbox{is a $\mathbb G$-semimartingale},\end{eqnarray*}
 and (\ref{Decompo4XG}) follows from this equality and (\ref{EqualityG2F}). \\
d)  Here, we prove assertion (d). To this end, we use  (\ref{EqualityG2F}) and derive
\begin{eqnarray*}
{{I_{\infty}}\over{2}} \leq \sup_{t\geq 0} \vert X^{\mathbb G}_{t}\vert^q=\max\left(\sup_{ 0\leq{t}<\tau} \vert X^{\mathbb F}_{t}\vert^q,\vert{k}^{(pr)}_{\tau}\vert^q\right)\leq{I}_{\infty},\end{eqnarray*}
where $ I:=\left(\sup_{ 0\leq{u}<\cdot} \vert X^{\mathbb F}_{u}\vert^q+\vert{k}^{(pr)}\vert^q\right)\is{D}.$ Hence, $E\left[\sup_{t\geq 0} \vert X^{\mathbb G}_{t}\vert^q\right]<\infty$ if and only if boh $E\left[\int_0^{\infty} \vert{k}^{(pr)}_t\vert^q{d}D_t\right]$ and 
$E\left[\int_0^{\infty} \sup_{ 0\leq{u}<t} \vert{X}^{\mathbb F}_{u}\vert^q{d}D_t^{o,\mathbb F}\right]$
 are finite. This proves assertion (d).\\
e) Suppose that  $X^{\mathbb G}$ is of class-$(\mathbb{G},{\cal{D}})$. On the one hand, we have  $E[\vert{k}^{(pr)}_{\tau}\vert]=E[\vert{X}^{\mathbb G}_{\tau}\vert]<\infty$, or equivalently $k^{(pr)}\in  L^1\left(\widetilde\Omega, {\rm{Prog}}(\mathbb F), P\otimes {{dD}}\right)$. On the other hand, due to $G_{\sigma}\leq 1$, for any $c>0$, we have 
\begin{align*}
E\left[\vert{X}^{\mathbb F}_{\sigma}\vert{G}_{\sigma}I_{\{\vert{X}^{\mathbb F}_{\sigma}\vert{G}_{\sigma}>c\}}\right]&\leq E\left[\vert{X}^{\mathbb F}_{\sigma}\vert{G}_{\sigma}I_{\{\vert{X}^{\mathbb F}_{\sigma}\vert>c\}}\right]=E\left[\vert{X}^{\mathbb F}_{\sigma}\vert{I}_{\{\sigma<\tau\}}I_{\{\vert{X}^{\mathbb F}_{\sigma}\vert>c\}}\right]\\
&=E\left[\vert{X}^{\mathbb G}_{\sigma}\vert{I}_{\{\sigma<\tau\}}I_{\{\vert{X}^{\mathbb G}_{\sigma}\vert>c\}}\right].\end{align*}
This proves that $X^{\mathbb F}G$ is of class-$(\mathbb{F},{\cal{D}})$, and the proposition is proved. 
\end{proof}
\subsection{The mathematical structures of the value process (Snell envelop)}
The main result of this subsection characterizes, in different manners, the Snell envelop of a process under $\mathbb G$ in terms of $\mathbb F$-Snell envelop and some $\mathbb G$-local martingales. To this end, we start with the following two lemmas.
\begin{lemma} \label{G-projection}For any nonnegative or integrable process $X$, we always have 
\begin{eqnarray*}\label{converting}E\left[X_{t}|\mathcal{G}_{t}\right]I_{\{t\ <\tau\}}={E\left[X_{t}I_{\{t\ <\tau\}}|\mathcal{F}_{t}\right]}G_t^{-1}I_{\{t\ <\tau\}}.
\end{eqnarray*}
\end{lemma}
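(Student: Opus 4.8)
The plan is to exploit the key structural fact about progressive enlargement: on the stochastic interval $\Lbrack 0,\tau\Lbrack$, the $\mathbb{G}$-conditional information reduces to an $\mathbb{F}$-conditional computation weighted by the Az\'ema supermartingale $G$. Concretely, the standard ``key lemma'' for progressive enlargement states that for a fixed time $t$ and any bounded $\mathcal{G}_t$-measurable random variable $Y$, one can write $Y I_{\{t<\tau\}} = \widetilde{Y} I_{\{t<\tau\}}$ for some $\mathcal{F}_t$-measurable $\widetilde{Y}$; I would first recall (or cite) this fact. The asserted identity is then essentially the ``projection'' form of this lemma, so I would prove it by testing against an arbitrary bounded $\mathcal{F}_t$-measurable test random variable.

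First I would reduce to the case where $X$ is replaced by the single random variable $X_t$ (the identity is stated pointwise in $t$), and by the usual monotone-class / truncation argument it suffices to treat $X_t$ bounded and nonnegative. Then I would verify that the right-hand side, call it $R_t := E\left[X_t I_{\{t<\tau\}}\mid\mathcal{F}_t\right] G_t^{-1} I_{\{t<\tau\}}$, equals $E\left[X_t\mid\mathcal{G}_t\right] I_{\{t<\tau\}}$ by checking the defining property of $\mathbb{G}$-conditional expectation on the set $\{t<\tau\}$. Take any bounded $\mathcal{G}_t$-measurable $U$; using the key lemma write $U I_{\{t<\tau\}} = \widetilde{U} I_{\{t<\tau\}}$ with $\widetilde{U}$ bounded and $\mathcal{F}_t$-measurable. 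Then
\begin{align*}
E\left[U R_t\right] &= E\left[\widetilde{U}\, E\left[X_t I_{\{t<\tau\}}\mid\mathcal{F}_t\right] G_t^{-1} I_{\{t<\tau\}}\right] = E\left[\widetilde{U}\, G_t^{-1} I_{\{t<\tau\}}\, E\left[X_t I_{\{t<\tau\}}\mid\mathcal{F}_t\right]\right].
\end{align*}
Here I use that $\widetilde{U} G_t^{-1} I_{\{t<\tau\}}$ is not $\mathcal{F}_t$-measurable, so I must be more careful: instead take the outer conditional expectation with respect to $\mathcal{F}_t$ first, using $E\left[I_{\{t<\tau\}}\mid\mathcal{F}_t\right] = G_t$ and the fact that $\widetilde{U}$ and $G_t$ are $\mathcal{F}_t$-measurable, so that $E\left[\widetilde{U}\, G_t^{-1} I_{\{t<\tau\}}\, E[X_t I_{\{t<\tau\}}\mid\mathcal{F}_t]\right] = E\left[\widetilde{U}\, G_t^{-1}\, E[X_t I_{\{t<\tau\}}\mid\mathcal{F}_t]\, E[I_{\{t<\tau\}}\mid\mathcal{F}_t]\right] = E\left[\widetilde{U}\, E[X_t I_{\{t<\tau\}}\mid\mathcal{F}_t]\right] = E\left[\widetilde{U} X_t I_{\{t<\tau\}}\right] = E\left[U X_t I_{\{t<\tau\}}\right] = E\left[U\, E[X_t\mid\mathcal{G}_t] I_{\{t<\tau\}}\right]$. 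Since this holds for all bounded $\mathcal{G}_t$-measurable $U$ and both sides are $\mathcal{G}_t$-measurable, the identity follows.

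The main obstacle, and the one point requiring genuine care, is the well-definedness and the measurability bookkeeping around $G_t^{-1}$: the factor $G_t^{-1}$ is only meaningful on $\{G_t>0\}$, but on $\{t<\tau\}$ one always has $G_t = E[I_{\{t<\tau\}}\mid\mathcal{F}_t] > 0$ almost surely (indeed $I_{\{t<\tau\}} \le I_{\{G_t>0\}}$ $P$-a.s.), so the product $G_t^{-1} I_{\{t<\tau\}}$ is well defined; under assumption (\ref{Assumption4Tau}) this is automatic anyway. One should also note the integrability: when $X$ is merely integrable rather than nonnegative, splitting into positive and negative parts and invoking the nonnegative case plus linearity handles it, the finiteness of all conditional expectations being guaranteed by $E[|X_t|]<\infty$. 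I expect the whole argument to be short; the essential input is the progressive-enlargement key lemma together with $E[I_{\{t<\tau\}}\mid\mathcal{F}_t]=G_t$.
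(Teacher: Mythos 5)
Your proof is correct. Note, however, that the paper does not actually prove this lemma: it simply cites \cite[XX.75-(c)/(d)]{DMM}, so there is no internal argument to compare against. What you have written out is essentially the standard derivation of that cited fact: reduce to $X_t$ bounded, invoke the ``key lemma'' of progressive enlargement ($U I_{\{t<\tau\}}=\widetilde U I_{\{t<\tau\}}$ with $\widetilde U$ being $\mathcal F_t$-measurable), test both sides against bounded $\mathcal G_t$-measurable $U$, and use $E[I_{\{t<\tau\}}\mid\mathcal F_t]=G_t$ to cancel the factor $G_t^{-1}$. The chain of equalities you give is sound: after replacing $U$ by $\widetilde U$ on $\{t<\tau\}$, the random variable $\widetilde U\,G_t^{-1}E[X_tI_{\{t<\tau\}}\mid\mathcal F_t]$ is $\mathcal F_t$-measurable, so conditioning on $\mathcal F_t$ produces the factor $G_t$ that cancels $G_t^{-1}$, and the identification of the left-hand side uses only that $\{t<\tau\}\in\mathcal G_t$. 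Your handling of the two delicate points (well-definedness of $G_t^{-1}$ on $\{t<\tau\}$, which is automatic here since $G>0$ is assumed in (\ref{Assumption4Tau}), and the extension from bounded to nonnegative or integrable $X$ by truncation and positive/negative parts) is also fine. The one blemish is the sentence beginning ``Here I use that $\widetilde U G_t^{-1}I_{\{t<\tau\}}$ is not $\mathcal F_t$-measurable, so I must be more careful'': it reads as a false start, but the computation that follows it is the correct one, so nothing is actually wrong. If you wanted to match the paper exactly you would simply cite Dellacherie--Maisonneuve--Meyer; your version has the advantage of being self-contained, at the cost of having to state (or cite) the key lemma for $\mathcal G_t=\mathcal G^0_{t+}$ rather than just $\mathcal G^0_t$, which deserves one explicit sentence.
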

This follows from \cite[XX.75-(c)/(d)]{DMM}, while the next lemma sounds new. 
\begin{lemma}\label{stoppingTimeLemma}
Let $\sigma_{1}$ and $\sigma_{2}$ be two $\mathbb{F}$-stopping times such that $\sigma_{1}\leq\sigma_{2}$ P-a.s.. Then, for any $\mathbb{G}$- stopping time, $\sigma^{\mathbb{G}}$, satisfying
\begin{equation}\label{sigmaG}
\sigma_{1}\wedge\tau\leq\sigma^{\mathbb{G}}\leq\sigma_{2}\wedge\tau \hspace{5mm} P\mbox{-a.s.},
\end{equation}
there exists an $\mathbb{F}$-stopping time $\sigma^{\mathbb{F}}$ such that 
\begin{equation}\label{sigmaF}
\sigma_{1}\leq\sigma^{\mathbb{F}}\leq\sigma_{2}\quad \mbox{ and }\quad \sigma^{\mathbb{F}}\wedge\tau=\sigma^{\mathbb{G}} \hspace{5mm} P\mbox{-a.s.}
\end{equation}
\end{lemma}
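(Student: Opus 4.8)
The goal is to recover an $\mathbb{F}$-stopping time $\sigma^{\mathbb F}$ from a $\mathbb{G}$-stopping time $\sigma^{\mathbb G}$ that is squeezed between $\sigma_1\wedge\tau$ and $\sigma_2\wedge\tau$. The starting point is the classical fact (cf. \cite[Lemma B.1]{Aksamit} or \cite[Lemma 4.4]{Jeulin1980}, already invoked in the proof of Theorem~\ref{PropositionG2F}) that any $\mathbb{G}$-stopping time $\sigma^{\mathbb G}$ satisfies $\sigma^{\mathbb G}\wedge\tau=\widehat\sigma\wedge\tau$ $P$-a.s.\ for some $\mathbb{F}$-stopping time $\widehat\sigma$; equivalently, this follows from the representation of $\mathbb{G}$-optional processes stopped at $\tau$ in terms of $\mathbb{F}$-optional processes (apply that representation to $I_{\Lbrack 0,\sigma^{\mathbb G}\Lbrack}$ and to $I_{\Rbrack\sigma^{\mathbb G},+\infty\Lbrack}$). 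So first I would fix such a ``raw'' $\mathbb{F}$-stopping time $\widehat\sigma$ with $\widehat\sigma\wedge\tau=\sigma^{\mathbb G}$ $P$-a.s., and then the remaining task is purely an $\mathbb F$-truncation argument: modify $\widehat\sigma$ so that it lands in $[\![\sigma_1,\sigma_2]\!]$ without disturbing its value on $\{\,\cdot<\tau\}$.

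\textbf{The truncation step.} The natural candidate is
\[
\sigma^{\mathbb F}:=\bigl(\widehat\sigma\vee\sigma_1\bigr)\wedge\sigma_2,
\]
which is manifestly an $\mathbb F$-stopping time lying in $[\![\sigma_1,\sigma_2]\!]$. It remains to check $\sigma^{\mathbb F}\wedge\tau=\sigma^{\mathbb G}$ $P$-a.s. The key observation, to be used repeatedly, is that on $\{\sigma^{\mathbb G}<\tau\}$ one has $\sigma^{\mathbb G}=\widehat\sigma$ (since $\sigma^{\mathbb G}=\widehat\sigma\wedge\tau$ and on this set $\widehat\sigma<\tau$ as well), and moreover on this set the hypothesis \eqref{sigmaG} gives $\sigma_1\wedge\tau\le\sigma^{\mathbb G}\le\sigma_2\wedge\tau$, hence $\sigma_1<\tau$ and therefore $\sigma_1\le\sigma^{\mathbb G}=\widehat\sigma$, and also $\sigma^{\mathbb G}\le\sigma_2$; consequently $\widehat\sigma\vee\sigma_1=\widehat\sigma$ and $\widehat\sigma\wedge\sigma_2=\widehat\sigma$, so $\sigma^{\mathbb F}=\widehat\sigma=\sigma^{\mathbb G}$ there, whence $\sigma^{\mathbb F}\wedge\tau=\sigma^{\mathbb G}$ on $\{\sigma^{\mathbb G}<\tau\}$. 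On the complementary set $\{\sigma^{\mathbb G}=\tau\}$ (recall $\tau<+\infty$ $P$-a.s.\ by \eqref{Assumption4Tau}, and $\sigma^{\mathbb G}\le\sigma_2\wedge\tau\le\tau$, so this is exactly $\{\sigma^{\mathbb G}\ge\tau\}$), the inequality $\sigma_2\wedge\tau\ge\sigma^{\mathbb G}=\tau$ forces $\sigma_2\ge\tau$, hence $\sigma^{\mathbb F}=(\widehat\sigma\vee\sigma_1)\wedge\sigma_2\ge\tau\wedge\tau$... this needs a touch more care: I must show $\sigma^{\mathbb F}\ge\tau$ there, i.e.\ $(\widehat\sigma\vee\sigma_1)\wedge\sigma_2\ge\tau$; since $\sigma_2\ge\tau$ on this set it suffices that $\widehat\sigma\vee\sigma_1\ge\tau$, and indeed $\widehat\sigma\ge\widehat\sigma\wedge\tau=\sigma^{\mathbb G}=\tau$. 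So $\sigma^{\mathbb F}\ge\tau$, giving $\sigma^{\mathbb F}\wedge\tau=\tau=\sigma^{\mathbb G}$ on $\{\sigma^{\mathbb G}=\tau\}$ as well. Combining the two cases yields $\sigma^{\mathbb F}\wedge\tau=\sigma^{\mathbb G}$ $P$-a.s.

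\textbf{Where the difficulty lies.} The only genuinely non-trivial input is the existence of the raw $\mathbb{F}$-stopping time $\widehat\sigma$ with $\widehat\sigma\wedge\tau=\sigma^{\mathbb G}$; everything afterwards is an elementary case analysis of $\max$/$\min$ on the partition $\{\sigma^{\mathbb G}<\tau\}\cup\{\sigma^{\mathbb G}=\tau\}$, using only \eqref{sigmaG} and $\tau<+\infty$. I would present the argument by first citing \cite[Proposition B.2]{Aksamit} (or the analogous statement in \cite{Jeulin1980}) for the raw reduction, then doing the two-case verification in a compact displayed computation, taking care not to break any display with a blank line. One subtlety worth a sentence: the sets $\{\sigma^{\mathbb G}<\tau\}$ and $\{\sigma^{\mathbb G}=\tau\}$ and the auxiliary sets like $\{\sigma_2\ge\tau\}$ are all in $\mathcal{G}_\infty$, not necessarily in $\mathcal{F}_\infty$, but that is harmless because we are verifying an a.s.\ identity of random variables, not an adaptedness claim — $\sigma^{\mathbb F}$ is $\mathbb F$-adapted by construction as a lattice combination of $\mathbb F$-stopping times, independently of the case analysis.
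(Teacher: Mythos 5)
Your proposal is correct and follows essentially the same route as the paper: both invoke the classical reduction (e.g.\ \cite[Proposition B.2]{Aksamit} or \cite[XX.75]{DMM}) to obtain a raw $\mathbb F$-stopping time $\widehat\sigma$ with $\widehat\sigma\wedge\tau=\sigma^{\mathbb G}$, and both take the same candidate $\sigma^{\mathbb F}=(\widehat\sigma\vee\sigma_1)\wedge\sigma_2$. The only difference is cosmetic: the paper verifies $\sigma^{\mathbb F}\wedge\tau=\sigma^{\mathbb G}$ via lattice distributivity identities, whereas you do it by a case split on $\{\sigma^{\mathbb G}<\tau\}$ versus $\{\sigma^{\mathbb G}=\tau\}$; both verifications are sound.
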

 The proof of this lemma is relegated to Appendix \ref{Appendix4Proofs}. Throughout the paper, for any $\cal{F}\times\cal{B}(\mathbb R^+)$-measurable process $X$ --which is nonnegative or $\mu:=P\otimes {{dD}}$-integrable--, its $\mathbb F$-optional projection with respect to $\mu$, denoted by $M_{\mu}^P(X\big|{\cal O}(\mathbb F))$, is the unique $\mathbb F$-optional process $Y$ such that for any bounded and $\mathbb F$-optional $H$, 
\begin{equation}\label{OptionalProjection4mu}
M_{\mu}^P[XH]:=E\left[\int_0^{\infty} X_sH_s dD_s\right]=E\left[\int_0^{\infty} Y_sH_s dD_s\right].
\end{equation}
\begin{theorem}\label{SnellEvelopG2F}Suppose $G>0$, and let $X^{\mathbb G}$ be a RCLL and $\mathbb G$-adapted process such that $(X^{\mathbb G})^{\tau}=X^{\mathbb G}$ and $\sup_{0\leq s\leq\cdot}\vert{X}^{\mathbb G}_s\vert\in{\cal{A}}^+_{loc}(\mathbb{G})$. Then consider the unique pair $(X^{\mathbb F}, k^{(pr)})$ associated to $X^{\mathbb G}$ via Theorem \ref{PropositionG2F}, and denote by 
\begin{equation}\label{k(F)}
k^{(op)}:=M^P_{\mu}( k^{(pr)}\big|{\cal O}(\mathbb F))\ \rm{and}\ k^{(\mathbb{F})}:=k^{(pr)}-k^{(op)},\end{equation}
where $\mu:=P\otimes {{dD}}$ see (\ref{OptionalProjection4mu}). Then the following assertions hold.\\
{\rm{(a)}} If either $X^{\mathbb G}$ is nonnegative or $E\left[\sup_{t\geq 0} (X^{\mathbb G}_{t})^+\right]<\infty$, then the $(\mathbb G,P)$-Snell envelop of $X^{\mathbb G}$, denoted ${\cal S}^{\mathbb G}$, is given by
\begin{equation}\label{Snell4(G,P)}
{\cal S}^{\mathbb G}={{{\cal S}^{\mathbb F}}\over{G}}I_{\Lbrack0,\tau\Lbrack}+{{(k^{(op)}\is{ D}^{o,\mathbb F})_{-}}\over{G_{-}^2}}\is{\cal T}(m)+k^{(\mathbb{F})}\is{ D}+\left(k^{(op)}+{{k^{(op)}\is{D}^{o,\mathbb F}}\over{G}}\right)\is{ N}^{\mathbb G},
\end{equation}
where ${\cal S}^{\mathbb F}$ is the $(\mathbb F, P)$-Snell envelop of the reward $\widetilde{X}^{\mathbb F}:=X^{\mathbb F}G+k^{(op)}\is{ D}^{o,\mathbb F}$.\\
{\rm{(b)}} Let T$\in (0,+\infty)$ and $\widetilde{Q}$ given in (\ref{Qtilde}).  If either $\displaystyle{E}^{\widetilde{Q}}\left[\sup_{0\leq t\leq{T}} (X^{\mathbb G}_{t})^+\right]<\infty$ or $X^{\mathbb G}\geq 0$, then the $(\mathbb G, \widetilde{Q})$-Snell envelop of $(X^{\mathbb G})^T$, denoted ${\cal S}^{\mathbb G,\widetilde{Q}}$, satisfies
\begin{eqnarray}\label{Snell4(G,Qtilde)}
{\cal S}^{\mathbb G,\widetilde{Q}}={{\widetilde{\cal S}^{\mathbb F}}\over{\widetilde{\cal E}^T}}(I_{\Lbrack0,\tau\Lbrack})^T+k^{(\mathbb{F})}\is{D}^T
+\left(k^{(op)}-{{k^{(op)}\is \widetilde{\cal E}}\over{\widetilde{\cal E}}}\right)\is (N^{\mathbb G})^T.
\end{eqnarray}
Here $\widetilde{\cal S}^{\mathbb F}$ is $(\mathbb F, P)$-Snell envelop of  the $\mathbb{F}$-reward $(X^{\mathbb F}{\widetilde{\cal E}}-k^{(op)}\is{\widetilde{\cal E}})^T$.
\end{theorem}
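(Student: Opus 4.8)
\emph{Overview and reduction to $\mathbb F$.} The plan is to compute ${\cal S}^{\mathbb G}$ from its essential‑supremum representation, reduce that computation to an optimal stopping problem under $\mathbb F$, and then rewrite the answer as the $\mathbb G$‑semimartingale displayed in (\ref{Snell4(G,P)}). Throughout put $A:=k^{(op)}\is D^{o,\mathbb F}$ and $\overline X^{\mathbb F}:=X^{\mathbb F}G+A$. A preliminary localisation along a sequence of $\mathbb G$‑stopping times coming from $\sup_{0\le s\le\cdot}|X^{\mathbb G}_s|\in{\cal A}^+_{loc}(\mathbb G)$, together with the transfer of integrability from $\mathbb G$ to $\mathbb F$ via Theorem~\ref{PropositionG2F}(c)--(d), reduces matters to the case where the relevant $(\mathbb G,P)$‑ and $(\mathbb F,P)$‑Snell envelopes exist as RCLL supermartingales of class $\cal D$ given by essential suprema; here one also checks that $\overline X^{\mathbb F}$ has enough integrability for ${\cal S}^{\mathbb F}$ to be defined, which follows from $0\le G\le1$ and $E[(|k^{(op)}|\is D^{o,\mathbb F})_\infty]=E[|k^{(op)}_\tau|]\le E[|X^{\mathbb G}_\tau|]$. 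Now fix an $\mathbb F$‑stopping time $\sigma$. Since $X^{\mathbb G}=(X^{\mathbb G})^\tau$, Lemma~\ref{stoppingTimeLemma} (with $\sigma_1=\sigma$, $\sigma_2=+\infty$) lets one realise every $\mathbb G$‑stopping time $\ge\sigma$ entering ${\cal S}^{\mathbb G}_\sigma$, after stopping at $\tau$, as $\theta\wedge\tau$ with $\theta$ a finite $\mathbb F$‑stopping time $\ge\sigma$, so that
\[
{\cal S}^{\mathbb G}_\sigma\, I_{\{\sigma<\tau\}}=\esssup_{\theta}\,E\big[X^{\mathbb F}_\theta I_{\{\theta<\tau\}}+k^{(pr)}_\tau I_{\{\sigma<\tau\le\theta\}}\ \big|\ {\cal G}_\sigma\big].
\]

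\emph{Computing the two conditional expectations and the reduced formula.} For the first term, the stopping‑time version of Lemma~\ref{G-projection}, the identity $P(\theta<\tau\,|\,{\cal F}_\theta)=G_\theta$ (from (\ref{GGtilde})) and the trace identity ${\cal G}_\sigma\cap\{\sigma<\tau\}={\cal F}_\sigma\cap\{\sigma<\tau\}$ give $E[X^{\mathbb F}_\theta I_{\{\theta<\tau\}}\,|\,{\cal G}_\sigma]=G_\sigma^{-1}E[(X^{\mathbb F}G)_\theta\,|\,{\cal F}_\sigma]\,I_{\{\sigma<\tau\}}$. For the second term, write $k^{(pr)}_\tau I_{\{\sigma<\tau\le\theta\}}=(k^{(pr)}I_{\Rbrack\sigma,\theta\Rbrack}\is D)_\infty$ and test against $I_A$, $A\in{\cal F}_\sigma$: since $I_AI_{\Rbrack\sigma,\theta\Rbrack}$ is $\mathbb F$‑optional one may replace $k^{(pr)}$ by $k^{(op)}$ (definition (\ref{OptionalProjection4mu})) and then $D$ by $D^{o,\mathbb F}$, whence $E[k^{(pr)}_\tau I_{\{\sigma<\tau\le\theta\}}\,|\,{\cal G}_\sigma]=G_\sigma^{-1}E[A_\theta-A_\sigma\,|\,{\cal F}_\sigma]\,I_{\{\sigma<\tau\}}$. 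Adding these, pulling out the ${\cal F}_\sigma$‑measurable $A_\sigma$, taking $\esssup_\theta$ and recognising ${\cal S}^{\mathbb F}$ as the $(\mathbb F,P)$‑Snell envelope of $\overline X^{\mathbb F}$ yields ${\cal S}^{\mathbb G}_\sigma I_{\{\sigma<\tau\}}=G_\sigma^{-1}({\cal S}^{\mathbb F}_\sigma-A_\sigma)I_{\{\sigma<\tau\}}$; combined with ${\cal S}^{\mathbb G}_t=X^{\mathbb G}_\tau=k^{(pr)}_\tau$ on $\Lbrack\tau,+\infty\Lbrack$ ($X^{\mathbb G}$ being constant there), and an optional‑section argument using Lemma~\ref{stoppingTimeLemma} again, this gives the reduced formula
\[
{\cal S}^{\mathbb G}=\frac{{\cal S}^{\mathbb F}-A}{G}\,I_{\Lbrack0,\tau\Lbrack}+k^{(pr)}\is D .
\]

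\emph{Reconciliation with (\ref{Snell4(G,P)}).} Writing $k^{(pr)}=k^{(op)}+k^{(\mathbb F)}$, the term $k^{(\mathbb F)}\is D$ already occurs in (\ref{Snell4(G,P)}) (and is a $\mathbb G$‑martingale: $E[k^{(\mathbb F)}_\tau\,|\,{\cal G}_t]=(k^{(\mathbb F)}\is D)_t$ by construction of $k^{(op)}$), so after cancelling it and the term $\tfrac{{\cal S}^{\mathbb F}}{G}I_{\Lbrack0,\tau\Lbrack}$ it remains to establish
\[
-\frac{A}{G}\,I_{\Lbrack0,\tau\Lbrack}+k^{(op)}\is D=\frac{A_-}{G_-^{2}}\is{\cal T}(m)+\Big(k^{(op)}+\frac{A}{G}\Big)\is N^{\mathbb G}.
\]
I would prove this by computing the $\mathbb G$‑canonical decomposition of the left‑hand side: expand the ratio $A/G$ on $\Lbrack0,\tau\Lbrack$ using the multiplicative decomposition $G=G_0{\cal E}(G_-^{-1}\is m)\widetilde{\cal E}$ of Lemma~\ref{Decomposition4G} and integration by parts, pass the resulting $\mathbb F$‑local‑martingale part (driven by $m$, stopped at $\tau$) into $\mathbb G$ through the operator ${\cal T}$ of Theorem~\ref{Toperator}(a), absorb the pieces carried by $D$ and by $\widetilde G^{-1}I_{\Rbrack0,\tau\Rbrack}\is D^{o,\mathbb F}$ into the $N^{\mathbb G}$‑integral of Theorem~\ref{Toperator}(b), and read off the jump of the left side at $\tau$ to produce the $k^{(op)}\is D$ and $(A/G)\is N^{\mathbb G}$ contributions; everything then collapses to the right‑hand side. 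For part (b) I would rerun the same two steps under $\widetilde Q_T$ on ${\cal G}_{\tau\wedge T}$: because $\widetilde Z^\tau$ is the $\mathbb G$‑martingale density and, by the recalled \cite[Proposition~4.3]{Choulli1}, $M^{T\wedge\tau}\in{\cal M}_{loc}(\mathbb G,\widetilde Q)$ for every $M\in{\cal M}_{loc}(\mathbb F)$, the $[m,\cdot]$‑type correction disappears and ${\cal T}$ degenerates to plain stopping, while — using $G=G_0{\cal E}(G_-^{-1}\is m)\widetilde{\cal E}$ with ${\cal E}(G_-^{-1}\is m)^{-1}=\widetilde Z$ — the role of $G$ is played by $\widetilde{\cal E}={\cal E}(-\widetilde G^{-1}\is D^{o,\mathbb F})$ and that of $\overline X^{\mathbb F}$ by $(X^{\mathbb F}\widetilde{\cal E}-k^{(op)}\is\widetilde{\cal E})^T$; the reduced formula and the (now ${\cal T}(m)$‑free, hence shorter) reconciliation then give (\ref{Snell4(G,Qtilde)}).

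\emph{Main obstacle.} The crux is the reconciliation step: turning the transparent reduced formula into the explicit $\mathbb G$‑semimartingale (\ref{Snell4(G,P)}) requires correctly expanding the ratio $A/G$, converting $\mathbb F$‑martingales into $\mathbb G$‑martingales through ${\cal T}$, and carefully tracking the jump at $\tau$ — precisely where sign and normalisation errors lurk. A secondary technical burden lies in the reduction step, namely the measurability of the $k^{(pr)}_\tau$‑piece, the trace identity on $\{\sigma<\tau\}$, the passage back from the $\mathbb F$‑ to the $\mathbb G$‑essential supremum (via Lemma~\ref{stoppingTimeLemma} and a routine gluing of stopping times on $\{\sigma<\tau\}$), and the verification that $\overline X^{\mathbb F}$ carries enough integrability for ${\cal S}^{\mathbb F}$ to exist.
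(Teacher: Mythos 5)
Your proposal is correct and follows essentially the same route as the paper: reduce ${\cal S}^{\mathbb G}$ on $\Lbrack0,\tau\Lbrack$ to the $(\mathbb F,P)$-Snell envelope of $X^{\mathbb F}G+k^{(op)}\is D^{o,\mathbb F}$ via the stopping-time correspondence (Lemma \ref{stoppingTimeLemma}) and the projection formula (Lemma \ref{G-projection}), and then convert $-(k^{(op)}\is D^{o,\mathbb F})G^{-1}I_{\Lbrack0,\tau\Lbrack}+k^{(op)}\is D$ into the ${\cal T}(m)$- and $N^{\mathbb G}$-terms, your ``reconciliation'' identity being exactly the paper's Lemma \ref{L/EpsilonTilde}(b) combined with (\ref{X-Fsemimartinagle}), and likewise for part (b) with $\widetilde{\cal E}$ replacing $G$ via Lemma \ref{L/EpsilonTilde}(a). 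The only organizational difference is that the paper establishes the conditional-expectation identity first for bounded $X^{\mathbb G}$ and removes boundedness at the end by monotone convergence (nonnegative case) or Fatou plus dominated convergence (integrable positive part), whereas you localize up front; both treatments of the integrability issue are equivalent.
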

\begin{remark}\label{Remarks4TheoremXXXX}
(a) The condition  $\sup_{0\leq s\leq\cdot}\vert{X}^{\mathbb G}_s\vert\in{\cal{A}}^+_{loc}(\mathbb{G})$ implies that the two last terms in the right-hand-side of (\ref{Snell4(G,P)}) and (\ref{Snell4(G,Qtilde)}) are well defined $\mathbb{G}$-local martingales. In fact,  in virtue of Theorem \ref{PropositionG2F}-(c), $\sup_{0\leq s\leq\cdot}\vert{X}^{\mathbb G}_s\vert\in{\cal{A}}^+_{loc}(\mathbb{G})$ yields $k^{(pr)}\in L^1_{loc}\left(\widetilde\Omega, {\rm{Prog}}(\mathbb F), P\otimes {{dD}}\right)$, or equivalently the pair $(k^{(\mathbb{F})},k^{(op)})$ belongs to $L^1_{loc}\left(\widetilde\Omega, {\rm{Prog}}(\mathbb F), P\otimes {{dD}}\right)\times  L^1_{loc}\left(\widetilde\Omega, {\cal{O}}(\mathbb F), P\otimes {{dD}}\right)$. On the one hand, this condition obviously implies that $k^{(\mathbb{F})}\is{D}\in{\cal{M}}_{loc}(\mathbb{G})$ and $k^{(op)}$ belongs to ${\mathcal{I}}^o_{loc}(N^{\mathbb G},\mathbb G)$. On the other hand, by considering $$\sigma_n:=\inf\{t\geq 0:\ \widetilde{\cal{E}}_t<n^{-1}\quad\mbox{or}\quad \vert{k}^{(op)}\is{D}^{o,\mathbb{F}}_t\vert>n\}\quad\mbox{ and}\quad{T}_n:=n\wedge\sigma_n,$$ which both increase to infinity,  and using 
$$G^{-1}=G_0^{-1}{\cal{E}}(G_{-}^{-1}\is{m})^{-1}{\widetilde{\cal{E}}}^{-1}=G_0^{-1}{{\cal{E}}(G_{-}^{-1}\is{m})^{-1}}{\widetilde{\cal{E}}}^{-1}_{-}\widetilde{G}/G,$$
 we get 
  \begin{align*} 
  E\left[\int_0^{T_n}{{G_t}\over{\widetilde{G}_t}}{{ \vert{k}^{(op)}\is{D}^{o,\mathbb{F}}_t\vert}\over{G_t}}dD_t  \right]&\leq {G}_0^{-1}n^2+ E\left[\int_0^{T_n}{{G_t}\over{\widetilde{G}_t}}{{ \vert{k}^{(op)}_t\vert\Delta{D}^{o,\mathbb{F}}_t}\over{G_t}}dD_t  \right]\\
  &\leq  {G}_0^{-1}n^2+E\left[\int_0^{T_n} \vert{k}^{(op)}_t\vert{d}D_t  \right]<\infty.
  \end{align*}
  This proves that $G^{-1}({k}^{(op)}\is{D}^{o,\mathbb{F}})\in {\mathcal{I}}^o_{loc}(N^{\mathbb G},\mathbb G)$, and similar reasoning proves that the process ${\widetilde{\cal{E}}}^{-1}({k}^{(op)}\is{\widetilde{\cal{E}}})$ belongs to $ {\mathcal{I}}^o_{loc}(N^{\mathbb G},\mathbb G)$ also. Hence, the claim is proved.\\
(b) For $\theta\in {\cal T}_{t\wedge\tau}^{\tau}(\mathbb G)$ and $\sigma\in {\cal T}_t^{\infty}(\mathbb F)$ with  $\theta=\sigma\wedge\tau$, see Lemma \ref{stoppingTimeLemma}, we have 
\begin{eqnarray}\label{equa100}
\begin{split}
X_{\theta}^{\mathbb G}&=X_{\sigma\wedge\tau}^{\mathbb G}I_{\{\sigma<\tau\}}+ k^{(pr)}_{\tau}I_{\{\sigma\geq\tau\}}=X_{\sigma}^{\mathbb F}I_{\{\sigma<\tau\}}+\int_0^{\sigma} k^{(pr)}_s dD_s+ k^{(pr)}_0{I}_{\{\tau=0\}}\\
&=X_{\sigma}^{\mathbb F}I_{\{\sigma<\tau\}}+ ({{k^{(op)}}\over{\widetilde G}} \is{ D}^{o,\mathbb F})_{\sigma\wedge\tau}+ (k^{(op)} \is{  N}^{\mathbb G})_{\theta}+k^{(\mathbb{F})}\is{ D}_{\theta}+ k^{(pr)}_0{I}_{\{\tau=0\}},
\end{split}\end{eqnarray}
\end{remark}
This latter remark plays important role in proving Theorem  \ref{SnellEvelopG2F}. The rest of this subsection is devoted to this proof, and hence we start with the next lemma which is useful here and in the rest of the paper as well.
\begin{lemma}\label{L/EpsilonTilde}  Suppose $G>0$ and let ${\widetilde{\cal E}}$ be defined in (\ref{EpsilonTilde}). Then the following assertions hold.\\
{\rm{(a)}} For any RCLL $\mathbb F$-semimartingale $L$, it holds that 
\begin{equation}\label{LoverEpsilonTilde}
L\widetilde{\cal E}^{-1}I_{\Lbrack0,\tau\Lbrack}+L\widetilde{\cal E}^{-1}\is N^{\mathbb G}=L_0I_{\{\tau>0\}}+\widetilde{\cal E}_{-}^{-1}\is L^{\tau},
\end{equation}
and 
\begin{equation}\label{LoverG}
{{L}\over{G}}I_{\Lbrack0,\tau\Lbrack}={{L_0}\over{G_0}}I_{\{\tau>0\}}-{{L_{-}}\over{G_{-}^2}}I_{\Rbrack0,\tau\Rbrack}\is {\cal{T}}(m)+{1\over{G_{-}}}I_{\Rbrack0,\tau\Rbrack}\is{\cal{T}}(L)- {{L}\over{G}}I_{\Rbrack0,\tau\Rbrack}\is N^{\mathbb G}.
\end{equation}
{\rm{(b)}} For any $\mathbb{F}$-optional process $k$ such that $V:=k\is D^{o,\mathbb{F}}\in {\cal A}_{loc}(\mathbb{F})$, we have 
\begin{equation}\label{V/G}
-{{V^{\tau}}\over{G^{\tau}}}={{V_{-}}\over{G_{-}^2}}\is\underbrace{(m^{\tau}-{\widetilde{G}}^{-1}\is [m,m]^{\tau})}_{={\cal{T}}(m)}-{{k+VG^{-1}}\over{\widetilde{G}}}I_{\Rbrack0,\tau\Rbrack}\is D^{o,\mathbb{F}}.
\end{equation}
\end{lemma}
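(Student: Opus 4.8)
The plan is to prove Lemma \ref{L/EpsilonTilde} by treating its three identities as consequences of the integration-by-parts (product) formula combined with the multiplicative decomposition of $G$ from Lemma \ref{Decomposition4G} and the structure of ${\cal T}$ and $N^{\mathbb G}$ from Theorem \ref{Toperator}. The unifying idea is that each identity expresses a $\mathbb G$-optional process, obtained by ``dividing'' an $\mathbb F$-semimartingale by $\widetilde{\cal E}$ or $G$ and restricting to $\Lbrack 0,\tau\Lbrack$, as an $\mathbb F$-stochastic integral plus a correction driven by $N^{\mathbb G}$; the correction is exactly what is needed so that the jump of $D$ at $\tau$ is accounted for.

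First I would prove (a), equation (\ref{LoverEpsilonTilde}). Recall $\widetilde{\cal E}={\cal E}(-{\widetilde G}^{-1}\is D^{o,\mathbb F})$ is an $\mathbb F$-semimartingale with $\widetilde{\cal E}>0$ (by Lemma \ref{Decomposition4G}), so $\widetilde{\cal E}^{-1}={\cal E}(-{\widetilde G}^{-1}\is D^{o,\mathbb F})^{-1}$ is a well-defined $\mathbb F$-semimartingale; a standard computation (e.g. via $1/{\cal E}(U)={\cal E}(\widehat U)$ with $\widehat U = -U + (1+\Delta U)^{-1}\is[U,U]^c+\dots$, but here $U=-{\widetilde G}^{-1}\is D^{o,\mathbb F}$ is of finite variation) gives $d(\widetilde{\cal E}^{-1}) = \widetilde{\cal E}_{-}^{-1}\widetilde{G}^{-1}(1-\Delta D^{o,\mathbb F}/\widetilde G)^{-1}\is D^{o,\mathbb F}$, or more cleanly $\widetilde{\cal E}^{-1}-1 = \widetilde{\cal E}_{-}^{-1}\is(\widetilde{\cal E})\cdot(\text{sign})$; rather than fight with this I would just use $d(L\widetilde{\cal E}^{-1}) = \widetilde{\cal E}_{-}^{-1}dL + L_{-}d(\widetilde{\cal E}^{-1}) + d[L,\widetilde{\cal E}^{-1}]$ by integration by parts, stop at $\tau$, and then add $L\widetilde{\cal E}^{-1}\is N^{\mathbb G}$. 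Using $N^{\mathbb G}=D-\widetilde G^{-1}I_{\Rbrack 0,\tau\Rbrack}\is D^{o,\mathbb F}$ from (\ref{processNG}), the jump of the left side at $\tau$ cancels against $L_{\tau}\widetilde{\cal E}_{\tau}^{-1}$ coming from $N^{\mathbb G}$, leaving precisely $L_0 I_{\{\tau>0\}} + \widetilde{\cal E}_{-}^{-1}\is L^{\tau}$; the finite-variation $\widetilde{\cal E}$-terms and the $D^{o,\mathbb F}$-terms must be checked to telescope, which is the routine-calculation core of this part.

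Next, equation (\ref{LoverG}): here I would use the multiplicative decomposition $G = G_0\,{\cal E}(G_{-}^{-1}\is m)\,\widetilde{\cal E}$, write $1/G$ as a product, and apply integration by parts to $L/G = L\cdot G^{-1}$; alternatively, and more in the spirit of the paper, I would combine (\ref{LoverEpsilonTilde}) with the analogous identity for division by ${\cal E}(G_{-}^{-1}\is m)$, using that ${\cal T}(m)$ is the $\mathbb G$-local martingale attached to $m$ via Theorem \ref{Toperator}(a) and that $\widetilde Z = 1/{\cal E}(G_{-}^{-1}\is m)$ has $\widetilde Z^{\tau}\in{\cal M}_{loc}(\mathbb G)$. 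The coefficients $-L_{-}/G_{-}^2$ in front of ${\cal T}(m)$ and $1/G_{-}$ in front of ${\cal T}(L)$ are exactly the Itô-quotient coefficients $d(1/x) = -dx/x^2$ and $d(L/G)$ has an $L_{-}/G_{-}^2$ cross term; the $-LG^{-1}\is N^{\mathbb G}$ term is, once again, the compensating jump at $\tau$. Part (b), equation (\ref{V/G}), is then the special case of (\ref{LoverG}) applied to $L=V=k\is D^{o,\mathbb F}$ (an $\mathbb F$-predictable finite-variation process, so ${\cal T}(V)$ and the bracket terms simplify: $[V,m]$ and $[m,m]$ enter, and one uses $V_0 = 0$), after rearranging and using $-\widetilde G^{-1}\Delta D^{o,\mathbb F}$-bookkeeping to combine the $\is D^{o,\mathbb F}$ pieces into the single term $-(k+VG^{-1})\widetilde G^{-1}I_{\Rbrack 0,\tau\Rbrack}\is D^{o,\mathbb F}$; note the identity as written reads ${\cal T}(m)=m^{\tau}-\widetilde G^{-1}\is[m,m]^{\tau}$, which is the finite-variation-$V$ specialization of (\ref{processMhat}) (the predictable-projection term drops because we are on $\{G_{-}>0\}$ everywhere under (\ref{Assumption4Tau})).

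The main obstacle I anticipate is the careful handling of the jump terms and the brackets: getting the coefficient $1/\widetilde G$ (rather than $1/G$) correct on $\Rbrack 0,\tau\Rbrack$, correctly identifying when $[m,m]$, $[L,m]$, or $\is D^{o,\mathbb F}$ cross-terms appear versus when they cancel, and verifying that all the $\mathbb G$-local martingale integrals are well defined (that the integrands lie in the appropriate $L^1_{loc}$ classes, using $G>0$, $G_{-}>0$, $\widetilde G>0$ from Lemma \ref{Decomposition4G} and the space ${\mathcal I}^o_{loc}(N^{\mathbb G},\mathbb G)$ from (\ref{SpaceLNG})). Everything else is bookkeeping with the product rule and the definitions (\ref{processMhat})–(\ref{processNG}) and (\ref{EpsilonTilde}); I would organize the proof so that (\ref{LoverEpsilonTilde}) and (\ref{LoverG}) are proved first and independently, and (\ref{V/G}) is deduced as a corollary.
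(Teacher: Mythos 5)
Your proposal follows essentially the same route as the paper: integration by parts on $L\widetilde{\cal E}^{-1}$ stopped at $\tau$, combined with $\widetilde{\cal E}=\widetilde{\cal E}_{-}G/{\widetilde G}$ and the definition of $N^{\mathbb G}$, gives (\ref{LoverEpsilonTilde}), and then multiplying by $\widetilde{Z}=1/{\cal E}(G_{-}^{-1}\is m)$ (equivalently, invoking the multiplicative decomposition of $G$) and applying It\^o gives (\ref{LoverG}), which is exactly what the paper does. Your derivation of (\ref{V/G}) as the specialization $L=V$ of (\ref{LoverG}) — using ${\cal T}(V)=k G_{-}{\widetilde G}^{-1}I_{\Rbrack0,\tau\Rbrack}\is D^{o,\mathbb F}$ and $N^{\mathbb G}-D=-{\widetilde G}^{-1}I_{\Rbrack0,\tau\Rbrack}\is D^{o,\mathbb F}$ — is correct and in fact supplies a step that the paper's own proof leaves implicit.
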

The proof is given in Appendix \ref{Appendix4Proofs}, while below we prove Theorem \ref{SnellEvelopG2F}.
\begin{proof}[Proof of Theorem \ref{SnellEvelopG2F}] This proof is divided into three parts. The first and second parts prove assertions (a) and (b) respectively when $X^{\mathbb{G}}$ is bounded, while the third part relaxes this condition and proves the theorem.\\
{\bf Part 1.} In this part, we suppose $X^{\mathbb G}$ is bounded, and proves assertion (a). Hence, under this assumption, the associated processes $X^{\mathbb F}, k^{(pr)}$ and $k^{(op)}$ are also bounded. As a result, both $ k^{(op)} \is{ N}^{\mathbb G}$ and $ k^{(\mathbb{F})}\is{ D}$ are uniformly integrable $\mathbb G$-martingales. Thus, by putting
\begin{eqnarray}\label{LGprocess}
L^{\mathbb{G}}:=k^{(op)} \is{ N}^{\mathbb G}+k^{(\mathbb{F})}\is{ D},
\end{eqnarray}
combining the remarks above with Lemma \ref{G-projection}, and taking conditional expectation with respect to ${\cal G}_t$ on both sides of (\ref{equa100}), we derive 
\begin{align}
Y_t(\theta)&:= E\left[ X_{\theta}^{\mathbb G}\big|{\cal G}_t\right]= E\left[ X_{\sigma}^{\mathbb F}I_{\{\sigma<\tau\}}+\int_0^{\sigma\wedge\tau} {{k^{(op)}_s}\over{{\widetilde G}_s}}  dD^{o,\mathbb F}_s \big|{\cal G}_t\right]+L^{\mathbb{G}}_t \nonumber\\
&=E\left[ X_{\sigma}^{\mathbb F}I_{\{\sigma<\tau\}}+\int_{t\wedge\tau}^{\sigma\wedge\tau} {{k^{(op)}_s}\over{{\widetilde G}_s}} dD^{o,\mathbb F}_s \bigg|{\cal G}_t\right]+({{k^{(op)}}\over{{\widetilde G}}}\is{ D}^{o,\mathbb F})_{t\wedge\tau}+L^{\mathbb{G}}_t \nonumber\\
&=E\left[ X_{\sigma}^{\mathbb F}I_{\{\sigma<\tau\}}+\int_{t\wedge\tau}^{\sigma\wedge\tau} {{k^{(op)}_s}\over{{\widetilde G}_s}} dD^{o,\mathbb F}_s \bigg|{\cal F}_t\right]{{I_{\{\tau>t\}}}\over{G_t}}+{{k^{(op)}}\over{{\widetilde G}}}\is{ D}^{o,\mathbb F}_{t\wedge\tau}+L^{\mathbb{G}}_t\nonumber \\
&=E\left[ G_{\sigma}X_{\sigma}^{\mathbb F} +\int_t^{\sigma}k^{(op)}_s dD^{o,\mathbb F}_s \bigg|{\cal F}_t\right]{{I_{\{\tau>t\}}}\over{G_t}}+({{k^{(op)}}\over{{\widetilde G}}}\is{ D}^{o,\mathbb F})_{t\wedge\tau}+L^{\mathbb{G}}_t \nonumber\\
&=:{{X^{\mathbb F}_t(\sigma)-(k^{(op)}\is{ D}^{o,\mathbb F})_t}\over{G_t}} I_{\{t<\tau\}}+({{k^{(op)}}\over{{\widetilde G}}}\is{ D}^{o,\mathbb F})_{t\wedge\tau}+L^{\mathbb{G}}_t.\label{Equation4Theta}
\end{align}
Thus, by taking the essential supremum  and using  (\ref{LGprocess}), we get 
\begin{equation*}\label{mainequality1}
{\cal S}^{\mathbb G}={{{\cal S}^{\mathbb F}-(k^{(op)}\is{ D}^{o,\mathbb F})}\over{G}} I_{\Lbrack0,\tau\Lbrack}+{{k^{(op)}}\over{{\widetilde G}}}\is({ D}^{o,\mathbb F})^{\tau}+k^{(op)} \is{ N}^{\mathbb G}+k^{(\mathbb{F})}\is{ D} .
\end{equation*}
Therefore, by combining this with (\ref{V/G}) (see Lemma \ref{L/EpsilonTilde}-(c)), and 
\begin{eqnarray}\label{X-Fsemimartinagle}
XI_{\Lbrack0,\tau\Lbrack}=X^{\tau}-X\is{ D}-X_0{I}_{\{\tau=0\}},\quad \mbox{for any}\quad \mathbb F\mbox{-semimartingale}\ X,\end{eqnarray}
we obtain immediately (\ref{Snell4(G,P)}), and part 1 is completed.\\
{\bf Part 2.} Here, we suppose that $X^{\mathbb {G}}$ is bounded, we fix $T\in (0,+\infty)$ and prove assertion (b).  Let $\theta\in {\cal T}_{t\wedge\tau}^{T\wedge\tau}(\mathbb G)$ and $\sigma\in  {\cal T}_t^T(\mathbb F)$ such that $\theta=\sigma\wedge\tau$. Then, similarly as in Part 1, by taking  $\widetilde{Q}$-conditional expectation in both sides of (\ref{equa100}), and using (\ref{LGprocess}) and the fact that the two processes $ k^{(op)} \is{ N}^{\mathbb G}$ and $ k^{(\mathbb{F})}\is{ D}$ remain uniformly integrable $\mathbb G$-martingales under $\widetilde{Q}$ (due the boundedness of $k^{(pr)}$ and $k^{(\mathbb{F})}$), we write  
\begin{align}
 &{\widetilde Y}_t(\theta):=E^{\widetilde Q} \left[ X_{\theta}^{\mathbb G}\big|{\cal G}_t\right]=E^{\widetilde Q}\left[ X_{\sigma}^{\mathbb F}I_{\{\sigma<\tau\}}+\int_{0}^{\sigma\wedge\tau} {{k^{(op)}_s}\over{{\widetilde G}_s}} dD^{o,\mathbb F}_s \bigg|{\cal G}_t\right]+L^{\mathbb{G}}_t  \nonumber\\
 &=E\left[ {{\widetilde{Z}_{\sigma}}\over{\widetilde{Z}_t}}X_{\sigma}^{\mathbb F}I_{\{\sigma<\tau\}}+\int_{t\wedge\tau}^{\sigma\wedge\tau} {{k^{(op)}_s\widetilde{Z}_s}\over{{\widetilde G}_s\widetilde{Z}_t}} dD^{o,\mathbb F}_s \bigg|{\cal G}_t\right]+{{k^{(op)}}\over{{\widetilde G}}}\is{ D}^{o,\mathbb F}_{t\wedge\tau}+L^{\mathbb{G}}_t \nonumber \\
 &=E\left[ \widetilde{Z}_{\sigma}X_{\sigma}^{\mathbb F}I_{\{\sigma<\tau\}}+\int_{t\wedge\tau}^{\sigma\wedge\tau}G_0 {{k^{(op)}_s}\over{{\widetilde G}_s}}dV^{\mathbb F}_s \bigg|{\cal F}_t\right]{{I_{\{\tau>t\}}}\over{\widetilde{Z}_tG_t}}+{{k^{(op)}}\over{{\widetilde G}}}\is{ D}^{o,\mathbb F}_{t\wedge\tau}+L^{\mathbb{G}}_t ,\label{equation400}\end{align}
 where $V^{\mathbb F}$ is a RCLL and nondecreasing process given by 
\begin{equation}\label{V(F)process}
V^{\mathbb F}:=1-\widetilde{\cal{E}}\quad\mbox{or equivalently}\quad dV^{\mathbb F}={\widetilde{G}}^{-1}{\widetilde{\cal{E}}}_{-}d{ D}^{o,\mathbb F}=G_0^{-1}{\widetilde{Z}}d{ D}^{o,\mathbb F}\ \mbox{and}\  V^{\mathbb F}_0=0.
\end{equation}
 Thus, by simplifying more in the right-hand-side of (\ref{equation400}), we obtain 
 \begin{align}
 &{\widetilde Y}_t(\theta)=E\left[ \widetilde{\cal E}_{\sigma}X_{\sigma}^{\mathbb F}+\int_t^{\sigma} k^{(op)}_sdV^{\mathbb F}_s \Big|{\cal F}_t\right]{{I_{\{\tau>t\}}}\over{\widetilde{\cal E}_t}}+({{k^{(op)}}\over{{\widetilde G}}}\is{ D}^{o,\mathbb F})_{t\wedge\tau}+L^{\mathbb{G}}_t \nonumber\\
& =:{{X^{\mathbb F}_t(\sigma)}\over{\widetilde{\cal E}_t}}I_{\{\tau>t\}}-{{ (k^{(op)}\is{ V}^{\mathbb F})_t}\over{\widetilde{\cal E}_t}}I_{\{\tau>t\}}+({{k^{(op)}}\over{{\widetilde G}}}\is{ D}^{o,\mathbb F})_{t\wedge\tau}+L^{\mathbb{G}}_t. \label{Equation4ThetaQtilde}
\end{align}
By taking essential supremum over all $\theta\in {\cal T}_{t\wedge\tau}^{T\wedge\tau}(\mathbb G)$, we get 
\begin{equation*}\label{mainequality2}
{\cal S}^{\mathbb G,\widetilde{Q}}={{{\widetilde{\cal S}}^{\mathbb{F}}-k^{(op)}\is(V^{\mathbb F})^T }\over{\widetilde{\cal E}^T}}(I_{\Lbrack0,\tau\Lbrack})^T+({{k^{(op)}}\over{{\widetilde G}}}\is{ D}^{o,\mathbb F})^{T\wedge\tau}+(L^{\mathbb{G}})^T,
\end{equation*}
where ${\widetilde{\cal S}}^{\mathbb{F}}$ is the Snell envelop, for the reward $(X^{\mathbb F}{\widetilde{\cal E}}+k^{(op)}\is{ D}^{o,\mathbb F})^T$, under $(\mathbb F, P)$. Thus, by combining the above equality with (\ref{LoverEpsilonTilde}) (see Lemma \ref{L/EpsilonTilde}-(a)) and (\ref{LGprocess}), we get (\ref{Snell4(G,Qtilde)}). This ends the second part.\\
{\bf Part 3:} Herein, we prove the theorem without the boundedness assumption on $X^{\mathbb{G}}$. To this end, by virtue of parts 1 and 2, we remark that the theorem follows immediately as soon as we prove that the equalities (\ref{Equation4Theta}) and (\ref{Equation4ThetaQtilde}) hold. Thus, we consider for $n\geq 0$, 
\begin{eqnarray*}
X^{\mathbb{G},n}:=X^{\mathbb{G}}I_{\{\vert{X}^{\mathbb{G}}\vert\leq n\}},
\end{eqnarray*}
 and its associated triplet $(X^{\mathbb{F},n}, k^{(pr,n)},k^{(op,n)})$ is given by 
 \begin{equation*}
 X^{\mathbb{F},n}:=X^{\mathbb{F}}I_{\{\vert{X}^{\mathbb{F}}\vert\leq n\}},\quad  k^{(pr,n)}:=k^{(pr)}I_{\{\vert {k}^{(pr)}\vert\leq n\}},\quad k^{(op,n)}:=M^P_{\mu}\left(k^{(pr,n)}\big|{\cal{O}}(\mathbb{F})\right).\end{equation*}
 Therefore it is clear that $X^{\mathbb{G},n}$ and its associated triplet $(X^{\mathbb{F},n}, k^{(pr,n)},k^{(op,n)})$ are bounded, and thanks to  parts 1 and 2, we conclude that they fulfills (\ref{Equation4Theta}) and (\ref{Equation4ThetaQtilde}). If $X^{\mathbb G}\geq0$, then $X^{\mathbb{G},n}$ is nonnegative and increases to $X^{\mathbb{G}}$, and all components of $(X^{\mathbb{F},n}, k^{(pr,n)},k^{(op,n)})$ are nonnegative and increase to the corresponding components of $(X^{\mathbb{F}}, k^{(pr)},k^{(op)})$ respectively. Thus, thanks to the convergence monotone theorem, it is clear that in this case $E[X^{\mathbb{G},n}_{\theta}|{\cal{G}}_t]$ (respectively $E^{\widetilde{Q}}[X^{\mathbb{G},n}_{\theta}|{\cal{G}}_t]$) increases to $E[X^{\mathbb{G}}_{\theta}|{\cal{G}}_t]$ (respectively $E^{\widetilde{Q}}[X^{\mathbb{G}}_{\theta}|{\cal{G}}_t]$) and $E\left[ G_{\sigma}X_{\sigma}^{\mathbb{F},n}+\int_t^{\sigma} k^{(op,n)}_sdD^{o,\mathbb F}_s \big|{\cal F}_t\right]$ (respectively $E\left[ \widetilde{\cal E}_{\sigma}X_{\sigma}^{\mathbb{F},n}+\int_t^{\sigma} k^{(op,n)}_sdV^{\mathbb F}_s \big|{\cal F}_t\right]$) increases to $E\left[ G_{\sigma}X_{\sigma}^{\mathbb{F}}+\int_t^{\sigma} k^{(op)}_sdV^{\mathbb F}_s \big|{\cal F}_t\right]$ (respectively $E\left[ \widetilde{\cal E}_{\sigma}X_{\sigma}^{\mathbb{F},n}+\int_t^{\sigma} k^{(op,n)}_sdV^{\mathbb F}_s \big|{\cal F}_t\right]$). This proves that (\ref{Equation4Theta}) and (\ref{Equation4ThetaQtilde})  hold for the case when $X^{\mathbb G}\geq0$, and the theorem is proved in this case.  Now suppose that $E\left[\sup_{t\geq 0} (X^{\mathbb G}_{t})^+\right]<\infty$. Then Fatou's lemma yields 
\begin{equation}\label{Fatou}
 E[X^{\mathbb{G}}_{\theta}|{\cal{G}}_t]\geq \limsup_{n\longrightarrow+\infty} E[X^{\mathbb{G},n}_{\theta}|{\cal{G}}_t],
\end{equation}
while a combination of the convergence dominated theorem with the inequality  
\begin{equation*}
E[X^{\mathbb{G},n}_{\theta}|{\cal{G}}_t]\geq  E[X^{\mathbb{G}}_{\theta}|{\cal{G}}_t]- E[\sup_t(X^{\mathbb{G}}_t)^+I_{\{\theta\geq T^{\mathbb{G}}_n\}}|{\cal{G}}_t]\end{equation*}
implies that 
\begin{eqnarray*}
 \liminf_{n\longrightarrow+\infty}E[X^{\mathbb{G},n}_{\theta}|{\cal{G}}_t]\geq  E[X^{\mathbb{G}}_{\theta}|{\cal{G}}_t].\end{eqnarray*}
Hence, a combination of the latter inequality with (\ref{Fatou}) proves that $E[X^{\mathbb{G},n}_{\theta}|{\cal{G}}_t]$ converges to $E[X^{\mathbb{G}}_{\theta}|{\cal{G}}_t]$ almost surely. Similar arguments, allow us to prove the convergence almost surely of $E\left[ G_{\sigma}X_{\sigma}^{\mathbb{F},n}+\int_t^{\sigma} k^{(op,n)}_sdD^{o,\mathbb F}_s \big|{\cal F}_t\right]$  to $E\left[ G_{\sigma}X_{\sigma}^{\mathbb{F}}+\int_t^{\sigma} k^{(op)}_sdV^{\mathbb F}_s \big|{\cal F}_t\right]$. This proves assertion (a). The proof of assertion (b), under the assumption $E^{\widetilde{Q}}\left[\sup_{0\leq{t}\leq{T}} (X^{\mathbb G}_{t})^+\right]<\infty$, mimics exactly the proof of assertion (a), and will be omitted. This ends the proof of theorem.\end{proof}
\subsection{$\mathbb{G}$-optimal stopping times versus $\mathbb{F}$-optimal stopping times }
In this subsection, we investigate how the solutions to the optimal stopping problems under $\mathbb{G}$ and $\mathbb{F}$ are related to each other in many aspects. 
\begin{theorem}\label{MaximalMinimalTimesG2F}Suppose $G>0$, and let $X^{\mathbb G}$ be a RCLL $\mathbb G$-optional process of class-$(\mathbb{G},{\cal{D}})$ such that $(X^{\mathbb G})^{\tau}=X^{\mathbb G}$. Consider the unique pair $(X^{\mathbb F}, k^{(pr)})$ associated to $X^{\mathbb G}$ via Theorem \ref{PropositionG2F}, and put $\widetilde{X}^{\mathbb{F}}:=X^{\mathbb F}G+k^{(op)}\is{ D}^{o,\mathbb F}$, where $k^{(\rm{op})}$ is given in (\ref{k(F)}). Then the following assertions hold.\\
{\rm{(a)}} The optimal stopping problem for $(X^{\mathbb G},\mathbb{G})$ has a solution if and only if the optimal stopping problem for $(\widetilde{X}^{\mathbb F},\mathbb{F})$ has a solution. Furthermore, if one of these solutions exists, then the minimal optimal stopping times  $\theta^{\mathbb{G}}_*$  and $\theta^{\mathbb{F}}_*$, for $(X^{\mathbb G},\mathbb{G})$ and  $(\widetilde{X}^{\mathbb F},\mathbb{F})$ respectively, exist and $\theta^{\mathbb{G}}_*={{\min(\theta^{\mathbb{F}}_*,\tau)}}$.\\
{\rm{(b)}} The maximal optimal stopping time ${\widetilde\theta}^{\mathbb{G}}$ for  $(X^{\mathbb G},\mathbb{G})$ exists if and only if the maximal optimal stopping time ${\widetilde\theta}^{\mathbb{F}}$ for $(\widetilde{X}^{\mathbb F},\mathbb{F})$ exists also, and they satisfy  ${\widetilde\theta}^{\mathbb{G}}={{\min({\widetilde\theta}^{\mathbb{F}},\tau)}}$.
\end{theorem}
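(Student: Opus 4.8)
The plan is to transport the problem through the stopping-time correspondence of Lemma~\ref{stoppingTimeLemma} and the value-process identity that underlies the proof of Theorem~\ref{SnellEvelopG2F}, and then to appeal to the classical structure of optimal stopping for RCLL class-$\mathcal D$ rewards (existence and attainment of the minimal optimal time; the role of the Mertens decomposition of the Snell envelope for the maximal one), see \cite{ElKaroui,Quenez1}. Throughout I may assume $\tau>0$ $P$-a.s. (on $\{\tau=0\}$, $X^{\mathbb G}\equiv X^{\mathbb G}_0$ and both problems are degenerate), I restrict the $\mathbb G$-problem to $\mathbb G$-stopping times $\theta\leq\tau$ (legitimate since $X^{\mathbb G}=(X^{\mathbb G})^{\tau}$ and $\tau<\infty$), and I enlarge the $\mathbb F$-problem to $\mathbb F$-stopping times with values in $[0,+\infty]$, with the convention $\widetilde X^{\mathbb F}_{\infty}:=(k^{(op)}\is{D}^{o,\mathbb F})_{\infty}$, which is a well-defined integrable random variable because $k^{(pr)}\in L^{1}(P\otimes D)$ by Theorem~\ref{PropositionG2F}-(d).

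\textbf{Step 1: the value identity and the correspondence of optimal times.} Since $X^{\mathbb G}$ is of class-$(\mathbb G,{\cal D})$, Theorem~\ref{PropositionG2F}-(d) gives $k^{(pr)}\in L^{1}(P\otimes D)$ and $\widetilde X^{\mathbb F}=X^{\mathbb F}G+k^{(op)}\is{D}^{o,\mathbb F}$ of class-$(\mathbb F,{\cal D})$, so that $k^{(op)}\is{N}^{\mathbb G}$ and $k^{(\mathbb F)}\is{D}$ are uniformly integrable $\mathbb G$-martingales; this is exactly what the computation leading to \eqref{Equation4Theta} in the proof of Theorem~\ref{SnellEvelopG2F} needs (beyond the decomposition \eqref{EqualityG2F}--\eqref{equa100} and Lemma~\ref{G-projection}). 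Reading that computation at time $0$, and at $\sigma=+\infty$ via the definition of $k^{(op)}$, yields for every $[0,+\infty]$-valued $\mathbb F$-stopping time $\sigma$ and $\theta:=\sigma\wedge\tau$
\[
E\!\left[X^{\mathbb G}_{\theta}\mid{\cal G}_{0}\right]=\frac{E\!\left[\widetilde X^{\mathbb F}_{\sigma}\mid{\cal F}_{0}\right]}{G_{0}},\qquad\text{and, taking essential suprema,}\qquad {\cal S}^{\mathbb G}_{0}=\frac{{\cal S}^{\mathbb F}_{0}}{G_{0}},
\]
where ${\cal S}^{\mathbb F}$ is the $(\mathbb F,P)$-Snell envelope of $\widetilde X^{\mathbb F}$; here one uses that, by Lemma~\ref{stoppingTimeLemma}, $\sigma\mapsto\sigma\wedge\tau$ maps the $\mathbb F$-stopping times onto the $\mathbb G$-stopping times bounded by $\tau$ and admits a measurable section. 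Hence $\theta=\sigma\wedge\tau$ is optimal for $(X^{\mathbb G},\mathbb G)$ if and only if $\sigma$ is optimal for $(\widetilde X^{\mathbb F},\mathbb F)$; writing $\Theta^{\mathbb G}$ and $\Theta^{\mathbb F}$ for the two families of optimal stopping times, this says $\Theta^{\mathbb G}=\{\sigma\wedge\tau:\ \sigma\in\Theta^{\mathbb F}\}$, which already proves that the $\mathbb G$-problem has a solution if and only if the $\mathbb F$-problem does.

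\textbf{Step 2: minimal optimal times (assertion (a)).} Assume a solution exists. By classical optimal-stopping theory $\Theta^{\mathbb F}$ is stable under $\wedge$ and $\theta^{\mathbb F}_{*}:=\essinf\Theta^{\mathbb F}$ belongs to $\Theta^{\mathbb F}$, and likewise $\theta^{\mathbb G}_{*}:=\essinf\Theta^{\mathbb G}\in\Theta^{\mathbb G}$. From $\Theta^{\mathbb G}=\{\sigma\wedge\tau:\ \sigma\in\Theta^{\mathbb F}\}$: on one hand $\theta^{\mathbb F}_{*}\wedge\tau\in\Theta^{\mathbb G}$, so $\theta^{\mathbb G}_{*}\leq\theta^{\mathbb F}_{*}\wedge\tau$; on the other hand any $\theta\in\Theta^{\mathbb G}$ equals $\sigma\wedge\tau$ with $\sigma\in\Theta^{\mathbb F}$, hence $\sigma\geq\theta^{\mathbb F}_{*}$ and $\theta\geq\theta^{\mathbb F}_{*}\wedge\tau$, whence $\theta^{\mathbb G}_{*}\geq\theta^{\mathbb F}_{*}\wedge\tau$. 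Thus $\theta^{\mathbb G}_{*}=\theta^{\mathbb F}_{*}\wedge\tau$.

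\textbf{Step 3: maximal optimal times (assertion (b)) and the main obstacle.} The family $\Theta^{\mathbb F}$ is upward-directed, so $\esssup\Theta^{\mathbb F}$ is the increasing limit of a sequence $\sigma_{n}\uparrow\widetilde\theta^{\mathbb F}$ in $\Theta^{\mathbb F}$, and the maximal optimal $\mathbb F$-time exists precisely when $\widetilde\theta^{\mathbb F}\in\Theta^{\mathbb F}$ (similarly for $\mathbb G$). As in Step~2, $x\mapsto x\wedge\tau$ commutes with this essential supremum and $\Theta^{\mathbb G}=\{\sigma\wedge\tau:\ \sigma\in\Theta^{\mathbb F}\}$, so $\esssup\Theta^{\mathbb G}=\widetilde\theta^{\mathbb F}\wedge\tau$; it then remains to prove $\widetilde\theta^{\mathbb F}\in\Theta^{\mathbb F}\iff\widetilde\theta^{\mathbb F}\wedge\tau\in\Theta^{\mathbb G}$, the direction "$\Rightarrow$'' being immediate. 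For "$\Leftarrow$'' I would use the characterization "$\nu$ is optimal iff $A_{\nu}=0$ and ${\cal S}_{\nu}=(\text{reward})_{\nu}$'', $A$ being the nondecreasing part of the Mertens decomposition of the relevant Snell envelope, together with the relation between the two Mertens decompositions: applying Lemma~\ref{L/EpsilonTilde}(a) (formula \eqref{LoverG}) to $L={\cal S}^{\mathbb F}$ and using that ${\cal T}$ maps $\mathbb F$-local martingales to $\mathbb G$-local martingales (Theorem~\ref{Toperator}), together with the Snell-envelope formula of Theorem~\ref{SnellEvelopG2F}, one identifies the nondecreasing part $A^{\mathbb G}$ of ${\cal S}^{\mathbb G}$ on $\Lbrack0,\tau\Lbrack$ with $G_{-}^{-1}I_{\Rbrack0,\tau\Rbrack}\is A^{\mathbb F}$ (the jump of ${\cal S}^{\mathbb G}$ at $\tau$ being carried by $N^{\mathbb G}$, hence contributing nothing to $A^{\mathbb G}$); since $G_{-}>0$ this gives $\{A^{\mathbb G}>0\}=\{A^{\mathbb F}_{\cdot\wedge\tau}>0\}$, and combined with the contact conditions it forces $A^{\mathbb F}_{\widetilde\theta^{\mathbb F}}=0$ and ${\cal S}^{\mathbb F}_{\widetilde\theta^{\mathbb F}}=\widetilde X^{\mathbb F}_{\widetilde\theta^{\mathbb F}}$, i.e. $\widetilde\theta^{\mathbb F}\in\Theta^{\mathbb F}$, giving $\widetilde\theta^{\mathbb G}=\widetilde\theta^{\mathbb F}\wedge\tau$. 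This last equivalence is the real difficulty: since $\wedge\tau$ forgets the behaviour of $\mathbb F$-times on $\Rbrack\tau,+\infty\Lbrack$, one must rule out that an optimal $\mathbb G$-time corresponds only to a \emph{non}-maximal $\mathbb F$-time, and this is exactly where the Mertens bookkeeping above — reinforced by $A^{\mathbb F}_{\sigma_{n}}=0$ along the approximating sequence — is needed.
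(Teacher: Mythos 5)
Your Steps 1 and 2 are sound and, modulo phrasing, follow the paper's route: the paper encodes optimality through the set ${\mathbb{T}}(X,\mathbb{H})$ of stopping times $\theta$ with $\Lbrack\theta\Rbrack\subset\{X={\cal S}^{\mathbb H}\}$ and $({\cal S}^{\mathbb H})^{\theta}\in{\cal M}(\mathbb H)$, and proves the correspondence $\theta^{\mathbb G}=\theta^{\mathbb F}\wedge\tau$ between these sets using the Snell-envelope formula (\ref{Snell4(G,P)}) together with Lemma \ref{L/EpsilonTilde}, whereas you work with the value identity at time $0$; for class-$({\cal D})$ rewards the two descriptions of optimality coincide, and your derivation of the minimal-time relation from the correspondence is the same as the paper's.

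The genuine gap is in the converse direction of assertion (b). Everything your ``Mertens bookkeeping'' can extract from the $\mathbb G$-problem lives on $\Lbrack0,\tau\Rbrack$: the identification of the nondecreasing part $A^{\mathbb G}$ with $G_{-}^{-1}I_{\Rbrack0,\tau\Rbrack}\is A^{\mathbb F}$ and the equality of the contact sets both hold only up to $\tau$, and on the event $\{\tau<\widetilde\theta^{\mathbb F}\}$ the optimality of $\widetilde\theta^{\mathbb G}=\tau$ is automatic (since ${\cal S}^{\mathbb G}_{\tau}=X^{\mathbb G}_{\tau}$ always), so it carries no information whatsoever about $A^{\mathbb F}$ or the contact condition on $\Rbrack\tau,\widetilde\theta^{\mathbb F}\Rbrack$. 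Your fallback ``$A^{\mathbb F}_{\sigma_{n}}=0$ along the approximating sequence'' only yields $A^{\mathbb F}_{\widetilde\theta^{\mathbb F}-}=0$ and gives nothing about the contact ${\cal S}^{\mathbb F}_{\widetilde\theta^{\mathbb F}}=\widetilde X^{\mathbb F}_{\widetilde\theta^{\mathbb F}}$ at the limit time, so the pathwise argument cannot close. The missing idea --- and the way the paper closes this direction --- is a conditional, not pathwise, use of $G>0$: write $\widetilde\theta^{\mathbb G}=\theta^{\mathbb F}\wedge\tau$ with $\theta^{\mathbb F}$ optimal for $\mathbb F$ (your Step 1 provides this); then any optimal $\mathbb F$-time $\theta$ satisfies $\theta\wedge\tau\leq\theta^{\mathbb F}\wedge\tau$, hence $I_{\{\theta<\tau\}}\leq I_{\{\theta\leq\theta^{\mathbb F}\}}$, and taking $E[\,\cdot\,|{\cal F}_{\theta}]$ gives $G_{\theta}\leq I_{\{\theta\leq\theta^{\mathbb F}\}}$, which by $G>0$ forces $\theta\leq\theta^{\mathbb F}$ $P$-a.s. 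Therefore $\theta^{\mathbb F}$ dominates every optimal $\mathbb F$-time and is itself optimal, i.e. it is the maximal one. This conditioning step is what your proposal lacks, and without it the equivalence in (b) is not established.
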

This theorem is established under strong integrability of $X^{\mathbb G}$. Thus, in our random horizon setting, the general framework of \cite{Quenez1} remains open. The proof of Theorem \ref{MaximalMinimalTimesG2F} relies essentially on Lemma \ref{L/EpsilonTilde}, and the following lemma which is interesting in itself.

\begin{lemma}\label{MinimalOST} Let $\mathbb{H}$ be a filtration,  $X$  be a RCLL and $\mathbb{H}$-adapted process of class-$(\mathbb{H},{\cal{D}})$, ${\cal{S}}^{\mathbb{H}}$  be its Snell envelop, and consider  
\begin{equation}
{\mathbb{T}}(X,\mathbb{H}):=\left\{\theta\in\mathcal{J}_0^{\infty}(\mathbb{H}):\ \Lbrack\theta\Rbrack\subset\{X={\cal{S}}^{\mathbb{H}}\},\ ({\cal{S}}^{\mathbb{H}})^{\theta}\in{\cal{M}}(\mathbb{H})\right\}.
\end{equation}
Then $\rm{ess}\inf{\mathbb{T}}(X,\mathbb{H})$ belongs to ${\mathbb{T}}(X,\mathbb{H})$ as soon as this set is not empty.
\end{lemma}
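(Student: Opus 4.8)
The plan is to show that the set $\mathbb{T}(X,\mathbb{H})$ is stable under pairwise minima, and then that it is closed under the monotone limits one obtains when taking the essential infimum of a decreasing sequence; classical results on the essential infimum then let us produce a sequence $(\theta_n)\subset\mathbb{T}(X,\mathbb{H})$ with $\theta_n\downarrow\theta_*:=\operatorname{ess}\inf\mathbb{T}(X,\mathbb{H})$, and the final step is to verify $\theta_*$ itself lies in $\mathbb{T}(X,\mathbb{H})$.

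First I would establish the lattice property: if $\theta,\theta'\in\mathbb{T}(X,\mathbb{H})$, then $\theta\wedge\theta'\in\mathbb{T}(X,\mathbb{H})$. The inclusion $\Lbrack\theta\wedge\theta'\Rbrack\subset\{X=\mathcal{S}^{\mathbb{H}}\}$ is immediate since $\Lbrack\theta\wedge\theta'\Rbrack\subset\Lbrack\theta\Rbrack\cup\Lbrack\theta'\Rbrack$. For the martingale property of $(\mathcal{S}^{\mathbb{H}})^{\theta\wedge\theta'}$, recall that for a class-$(\mathbb{H},\mathcal{D})$ process the Snell envelope $\mathcal{S}^{\mathbb{H}}$ is the smallest RCLL supermartingale dominating $X$, and that $(\mathcal{S}^{\mathbb{H}})^{\rho}$ is a martingale precisely when $E[\mathcal{S}^{\mathbb{H}}_0]=E[X_\rho]$ (optimality of $\rho$). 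On $\{\theta\le\theta'\}\in\mathcal{F}_\theta$ we have $\mathcal{S}^{\mathbb{H}}_{\theta\wedge\theta'}=\mathcal{S}^{\mathbb{H}}_\theta=X_\theta$, and on $\{\theta'<\theta\}$ we have $\mathcal{S}^{\mathbb{H}}_{\theta\wedge\theta'}=\mathcal{S}^{\mathbb{H}}_{\theta'}=X_{\theta'}$; stopping the supermartingale $(\mathcal{S}^{\mathbb{H}})^{\theta}$ at the stopping time $\theta\wedge\theta'$ (which is a martingale up to $\theta$) and using that the stopped process $(\mathcal{S}^{\mathbb{H}})^{\theta'}$ is a martingale as well, one gets $E[\mathcal{S}^{\mathbb{H}}_{\theta\wedge\theta'}]=E[\mathcal{S}^{\mathbb{H}}_0]$, whence $\theta\wedge\theta'$ is optimal and $(\mathcal{S}^{\mathbb{H}})^{\theta\wedge\theta'}\in\mathcal{M}(\mathbb{H})$.

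Next, using the lattice property, standard theory of the essential infimum (see e.g. \cite{ElKaroui,DMM}) gives a decreasing sequence $(\theta_n)_{n\ge1}$ in $\mathbb{T}(X,\mathbb{H})$ with $\theta_n\downarrow\theta_*$ $P$-a.s., and $\theta_*$ is an $\mathbb{H}$-stopping time in $\mathcal{J}_0^\infty(\mathbb{H})$. I would then pass to the limit. Right-continuity of both $X$ and $\mathcal{S}^{\mathbb{H}}$ gives $X_{\theta_n}\to X_{\theta_*}$ and $\mathcal{S}^{\mathbb{H}}_{\theta_n}\to\mathcal{S}^{\mathbb{H}}_{\theta_*}$ $P$-a.s.; since $\mathcal{S}^{\mathbb{H}}_{\theta_n}=X_{\theta_n}$ for every $n$, we obtain $\mathcal{S}^{\mathbb{H}}_{\theta_*}=X_{\theta_*}$, and since this argument applies to $\theta_*$ restricted to any set $\{\theta_*=\theta_n\ \text{eventually}\}$ and to the "strict decrease" set using the left limit $\mathcal{S}^{\mathbb{H}}_{\theta_*-}$, one checks $\Lbrack\theta_*\Rbrack\subset\{X=\mathcal{S}^{\mathbb{H}}\}$. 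For the martingale property, class-$(\mathbb{H},\mathcal{D})$ makes $\{\mathcal{S}^{\mathbb{H}}_{\theta_n}\}_n$ uniformly integrable, so $E[X_{\theta_*}]=\lim_n E[X_{\theta_n}]=E[\mathcal{S}^{\mathbb{H}}_0]$, giving optimality of $\theta_*$ and hence $(\mathcal{S}^{\mathbb{H}})^{\theta_*}\in\mathcal{M}(\mathbb{H})$; therefore $\theta_*\in\mathbb{T}(X,\mathbb{H})$.

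The main obstacle I anticipate is the delicate point in the lattice step and in the limit step that $\Lbrack\theta_*\Rbrack\subset\{X=\mathcal{S}^{\mathbb{H}}\}$ at instants where the approximating times strictly decrease to $\theta_*$: there one must rule out a jump, i.e. argue that $\mathcal{S}^{\mathbb{H}}_{\theta_*}=\mathcal{S}^{\mathbb{H}}_{\theta_*-}=X_{\theta_*-}$ cannot fail, which uses that $\mathcal{S}^{\mathbb{H}}$ has no negative jumps on $\{\mathcal{S}^{\mathbb{H}}_->X_-\}$ together with the supermartingale/optimality structure; handling this cleanly (rather than the routine uniform-integrability bookkeeping) is where the real work lies.
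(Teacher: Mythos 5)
Your proof is correct and follows essentially the same route as the paper: stability of $\mathbb{T}(X,\mathbb{H})$ under pairwise minima makes the set downward directed, a non-increasing sequence $\theta_n\downarrow\theta_*:=\operatorname{ess}\inf\mathbb{T}(X,\mathbb{H})$ is extracted, and the right-continuity of $X$ and $\mathcal{S}^{\mathbb{H}}$ yields $\Lbrack\theta_*\Rbrack\subset\{X=\mathcal{S}^{\mathbb{H}}\}$. Two small remarks: the martingale conditions (both for $\theta\wedge\theta'$ and for $\theta_*$) follow at once from the identity $(\mathcal{S}^{\mathbb{H}})^{\theta\wedge\theta'}=\bigl((\mathcal{S}^{\mathbb{H}})^{\theta}\bigr)^{\theta'}$ together with optional stopping, so your detour through the optimality characterization $E[\mathcal{S}^{\mathbb{H}}_0]=E[X_\rho]$ is unnecessary, and the ``delicate point'' you anticipate about left limits is a phantom --- since the $\theta_n$ approach $\theta_*$ from above, right-continuity is precisely the correct tool and no jump analysis at $\theta_*-$ is required.
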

\begin{proof} Suppose that ${\mathbb{T}}(X,\mathbb{H})\not=\emptyset$, and remark that $\theta=\min(\theta_1,\theta_2)\in {\mathbb{T}}(X,\mathbb{H})$, for any $\theta_i\in {\mathbb{T}}(X,\mathbb{H})$, $i=1,2$, due to $\Lbrack\theta\Rbrack\subset \Lbrack\theta_2\Rbrack\cup \Lbrack\theta_1\Rbrack$. This implies that this set is downward directed, and hence there exists a non-increasing sequence $\theta_n\in  {\mathbb{T}}(X,\mathbb{H})$ such that  we have $\widetilde\theta:={\essinf}\ {\mathbb{T}}(X,\mathbb{H})={\inf_n}{\theta_n}$. It is obvious that $\Lbrack\widetilde\theta\Rbrack\subset\{X={\cal{S}}^{\mathbb{H}}\}$ due to the right-continuity of both $X$ and ${\cal{S}}^{\mathbb{H}}$. This proves the lemma.
\end{proof}
The remaining part of this subsection proves Theorem \ref{MaximalMinimalTimesG2F}.
\begin{proof}[Proof of Theorem \ref{MaximalMinimalTimesG2F}]  
The proof of this theorem is given in three parts.\\
{\bf Part 1.} This part proves the following fact:  
\begin{equation}\label{mainclaim}
\theta^{\mathbb{G}}\in {\mathbb{T}}(X^{\mathbb{G}},\mathbb{G})\ \mbox{iff there exists}\ \theta^{\mathbb{F}}\in {\mathbb{T}}(\widetilde{X}^{\mathbb{F}},\mathbb{F})\ \mbox{and}\ \theta^{\mathbb{G}}={{\min(\theta^{\mathbb{F}},\tau)}},\ P\mbox{-a.s.}.\end{equation}
Let $\theta^{\mathbb{G}}\in {\mathbb{T}}(X^{\mathbb{G}},\mathbb{G})$. Thus, there exists an $\mathbb{F}$-stopping time $\theta^{\mathbb{F}}$ such that 
\begin{equation}\label{T(G,XG)}
\Lbrack\theta^{\mathbb{F}}\wedge\tau\Rbrack\subset\{X^{\mathbb{G}}={\cal{S}}^{\mathbb{G}}\},\quad\mbox{and}\quad ({\cal{S}}^{\mathbb{G}})^{\theta^{\mathbb{F}}}\in {\cal{M}}(\mathbb{G}).\end{equation}
On the one hand, in virtue of Lemma \ref{L/EpsilonTilde}-(b) and (\ref{Snell4(G,P)}), remark that $X^{\mathbb{G}}_{\tau}=k^{(pr)}_{\tau}={\cal{S}}^{\mathbb{G}}_{\tau}$ $P$-a.s., and hence 
\begin{equation}\label{equality4T(G,XG)}
\{X^{\mathbb{G}}={\cal{S}}^{\mathbb{G}}\}=(\{X^{\mathbb{G}}={\cal{S}}^{\mathbb{G}}\}\cap\Lbrack0,\tau\Lbrack)\cup \Lbrack\tau\Rbrack.\end{equation}
On the other hand, by combining Lemma \ref{L/EpsilonTilde}-(b) (precisely the equality (\ref{V/G})) and (\ref{Snell4(G,P)}) again, we obtain
\begin{align*}
\{X^{\mathbb{G}}={\cal{S}}^{\mathbb{G}}\}\cap\Lbrack0,\tau\Lbrack&=\left\{X^{\mathbb{F}}=\left({\cal{S}}^{\mathbb{F}}-k^{(\rm{op})}\is D^{o,\mathbb{F}}\right)G^{-1}\right\}\cap\Lbrack0,\tau\Lbrack\\
&=\{{\cal{S}}^{\mathbb{F}} =GX^{\mathbb{F}}+k^{(\rm{op})}\is D^{o,\mathbb{F}}\}\cap\Lbrack0,\tau\Lbrack=\{{\cal{S}}^{\mathbb{F}} ={\widetilde{X}}^{\mathbb{F}}\}\cap\Lbrack0,\tau\Lbrack.
\end{align*}
Therefore, by combining this equality with(\ref{equality4T(G,XG)}) and
$$
\Lbrack{{\min(\theta^{\mathbb{F}},\tau)}}\Rbrack\cap \Lbrack0,\tau\Lbrack=\Lbrack \theta^{\mathbb{F}}\Rbrack\cap \Lbrack0,\tau\Lbrack,$$
we deduce that the first condition in (\ref{T(G,XG)}) is equivalent to 
\begin{equation*}
\Lbrack \theta^{\mathbb{F}}\Rbrack\cap \Lbrack0,\tau\Lbrack\subset \{{\cal{S}}^{\mathbb{F}} ={\widetilde{X}}^{\mathbb{F}}\}\cap\Lbrack0,\tau\Lbrack,\quad\mbox{or equivalently}\quad
I_{\Lbrack \theta^{\mathbb{F}}\Rbrack}I_{ \Lbrack0,\tau\Lbrack}\leq I_{\{{\cal{S}}^{\mathbb{F}} ={\widetilde{X}}^{\mathbb{F}}\}}I_{\Lbrack0,\tau\Lbrack}. 
\end{equation*}
Thus, by taking $\mathbb{F}$-optional projection,  and using $G={^{o,\mathbb{F}}}(I_{\Lbrack0,\tau\Lbrack})>0$, this latter condition is equivalent to  
\begin{equation}\label{Result1}
\Lbrack \theta^{\mathbb{F}}\Rbrack\subset \{{\cal{S}}^{\mathbb{F}} ={\widetilde{X}}^{\mathbb{F}}\}.\end{equation}
Thanks again to Theorem \ref{SnellEvelopG2F}-(a) and Lemma \ref{L/EpsilonTilde}-(a), we conclude that 
$$
({\cal{S}}^{\mathbb{G}})^{\theta}\in {\cal{M}}_{loc}(\mathbb{G})\ \mbox{iff}\ ({\cal{S}}^{\mathbb{F}}G^{-1}I_{\Lbrack0,\tau\Lbrack})^{\theta}\in {\cal{M}}_{loc}(\mathbb{G})\  \mbox{iff}\ {\cal{T}}(({\cal{S}}^{\mathbb{F}})^{\theta})\in {\cal{M}}_{loc}(\mathbb{G}),$$
for any $\mathbb{F}$-stopping time $\theta$. As $({\cal{S}}^{\mathbb{F}})^{\theta}$ is an $\mathbb{F}$-supermartingale, then there exist $M\in {\cal{M}}_{loc}(\mathbb{F})$ and a nondecreasing $\mathbb{F}$-predictable $A$ such that $({\cal{S}}^{\mathbb{F}})^{\theta}={\cal{S}}^{\mathbb{F}}_0+M-A$ and $M_0=A_0=0$. Thus,  ${\cal{T}}(({\cal{S}}^{\mathbb{F}})^{\theta})\in {\cal{M}}_{loc}(\mathbb{G})$ if and only if 
$${{G_{-}}\over{\widetilde{G}}}\is{A}^{\tau}-{^{p,\mathbb{F}}}(I_{\{\widetilde{G}=1\}})\is A^{\tau}={\cal{T}}(A)\in {\cal{M}}_{loc}(\mathbb{G}),$$
or equivalently its $\mathbb{G}$-compensator, which coincides with $ A^{\tau}$, is a null process. This implies that $G_{-}\is A\equiv 0$, and hence $A\equiv0$ due to $G>0$. Therefore, $({\cal{S}}^{\mathbb{F}})^{\theta}$ is an $\mathbb{F}$-local martingale. This proves the claim that 
\begin{equation}\label{Equivalence4GmgFmg}
({\cal{S}}^{\mathbb{G}})^{\theta}\in {\cal{M}}_{loc}(\mathbb{G})\ \mbox{iff}\ ({\cal{S}}^{\mathbb{F}})^{\theta}\in {\cal{M}}_{loc}(\mathbb{F}),\quad \mbox{for any $\mathbb{F}$-stopping}\ \theta.\end{equation}
Hence, the claim (\ref{mainclaim}) follows immediately from combining (\ref{T(G,XG)}), (\ref{Result1}) and (\ref{Equivalence4GmgFmg}). This ends part 1.\\
{\bf Part 2.} Herein, we prove assertion (a). Thanks to part 1, it is clear that $ {\mathbb{T}}(X^{\mathbb{G}},\mathbb{G})\not=\emptyset$ if and only if $ {\mathbb{T}}(\widetilde{X}^{\mathbb{F}},\mathbb{F})\not=\emptyset$. Thus, on the one hand, this proves the first statement in assertion (a). On the other hand, by combining this statement with Lemma \ref{MinimalOST}  and part 1 again, the second statement of assertion (a) follows immediately. \\
{\bf Part 3.} If the maximal optimal stopping time ${\widetilde\theta}^{\mathbb{F}}$ for $(\widetilde{X}^{\mathbb F},\mathbb{F})$ exists, then ${\widetilde\theta}^{\mathbb{F}}\in {\mathbb{T}}(\widetilde{X}^{\mathbb{F}},\mathbb{F})$ and for any $\theta\in {\mathbb{T}}(\widetilde{X}^{\mathbb{F}},\mathbb{F})$, we have $\theta\leq {\widetilde\theta}^{\mathbb{F}}=\esssup {\mathbb{T}}(\widetilde{X}^{\mathbb{F}},\mathbb{F})$ $P$-a.s.. Hence, for any $\theta^{\mathbb{G}}\in  {\mathbb{T}}(X^{\mathbb{G}},\mathbb{G})$, there exists $\theta\in {\mathbb{T}}(\widetilde{X}^{\mathbb{F}},\mathbb{F})$ satisfying
$$
\theta^{\mathbb{G}}={{\min(\theta,\tau)}}\leq {{\min({\widetilde\theta}^{\mathbb{F}},\tau)}}\in  {\mathbb{T}}(X^{\mathbb{G}},\mathbb{G}).$$
This proves that ${{\min({\widetilde\theta}^{\mathbb{F}},\tau)}}=\rm{ess}\sup{\mathbb{T}}(X^{\mathbb{G}},\mathbb{G})$, and hence the maximal optimal stopping time for $(X^{\mathbb{G}},\mathbb{G})$ exists and coincides with ${\widetilde\theta}^{\mathbb{F}}\wedge\tau$. To prove the converse, we suppose that the maximal optimal stopping time ${\widetilde\theta}^{\mathbb{G}}$ for  $(X^{\mathbb G},\mathbb{G})$ exists. Then in virtue of part 1, there exists $\theta^{\mathbb{F}}\in {\mathbb{T}}(\widetilde{X}^{\mathbb{F}},\mathbb{F})$ such that ${\widetilde\theta}^{\mathbb{G}}={{\min(\theta^{\mathbb{F}},\tau)}}$, $P$-a.s., and for any 
$\theta\in {\mathbb{T}}(\widetilde{X}^{\mathbb{F}},\mathbb{F})$, we have ${{\min(\theta,\tau)}}\in  {\mathbb{T}}(X^{\mathbb{G}},\mathbb{G})$ 
$$
\mbox{and}\quad {{\min(\theta,\tau)}}\leq {\widetilde\theta}^{\mathbb{G}}={{\min(\theta^{\mathbb{F}},\tau)}},\quad P\mbox{-a.s..}$$
This yields $\theta\leq \theta^{\mathbb{F}}\quad P\mbox{-a.s. on}\ (\theta<\tau),$ or equivalently 
$I_{\{\theta<\tau\}}\leq{I}_{\{\theta\leq \theta^{\mathbb{F}}\}}$ $P\mbox{-a.s..}$
By taking conditional expectation with respect to ${\cal{F}}_{\theta}$, on both sides of this inequality, we get $G_{\theta}\leq{I}_{\{\theta\leq \theta^{\mathbb{F}}\}}$ $P$-a.s.. and hence $\theta\leq \theta^{\mathbb{F}}$ $P$-a.s.. Therefore, we get $\esssup{\mathbb{T}}(\widetilde{X}^{\mathbb{F}},\mathbb{F})= \theta^{\mathbb{F}}\in {\mathbb{T}}(\widetilde{X}^{\mathbb{F}},\mathbb{F})$. Hence, the maximal optimal stopping time for $(\widetilde{X}^{\mathbb{F}},\mathbb{F})$,  denoted by ${\widetilde\theta}^{\mathbb{F}}$, exists and satisfies ${{\min(\tau,{\widetilde\theta}^{\mathbb{F}})}}= {\widetilde\theta}^{\mathbb{G}}$. This proves assertion (b), and completes the proof of the theorem.
\end{proof}
{{\section{Conclusion}
In this paper, we addressed various aspects of the optimal stopping problem in the setting where there are two flows of information. One ``public" flow $\mathbb{F}$ which is received by everyone in the system overtime, and a larger flow $\mathbb{G}$ containing additional information about the occurrence of a random time $\tau$. In this framework, our study starts by parametrizing in a unique manner (i.e. one-to-one parametrization) any $\mathbb{G}$-reward by processes and rewards that are $\mathbb{F}$-observable. Afterwards, we use this parametrization to single out the deep mathematical structures of the value process of the optimal stopping problem under $\mathbb{G}$, while highlighting the various terms induced by the randomness in $\tau$.  This resulting decomposition is highly motivated by the applications in risk management of the informational risks intrinsic to $\tau$. Furthermore, we established the one-to-one connection between $\mathbb{G}$-optimal stopping problem and its associated $\mathbb{F}$-optimal stopping problem, and we described the exact relationship between their maximal (respect. minimal) optimal times. Up to our knowledge, the obtained results are the first of their kind.

Besides this, our setting is the most general considered in the literature for the pair $(\mathbb{F},\tau)$. In fact, herein, we assume that the survival probability process $G$ is positive (i.e.$G>0$) only, while most of the literature (or all of it) assumes other assumptions such that the initial system (represented by $\mathbb{F}$) is Markovian and $\tau$ satisfies the immersion assumption, or the density assumption, or  the independence assumption between $\tau$ and $\mathbb{F}$, see \cite{Guo} and the references therein to cite a few. An other direction in the literature consists of addressing the optimal stopping problem with restricted information instead, and for this framework we refer the reader to \cite{Agram} and \cite{Oksendal0} and the references therein to cite a few.

Even though our setting is very general, it can be extended in two directions. The first extension consists of relaxing the assumption $G>0$ even though it is always assumed in the literature and is very acceptable in practice in contrast to the other assumptions. The second extension lies in allowing the reward process to have irregularities in its paths as in \cite{Quenez1}, and explore then how this irregularities interplay with the randomness in $\tau$. }}
\vspace{6pt} 

\appendix
\section{Proofs of Lemmas \ref{stoppingTimeLemma} and \ref{L/EpsilonTilde}}\label{Appendix4Proofs}
\begin{proof}[Proof of Lemma \ref{stoppingTimeLemma}]
Thanks to \cite[XX.75 b)]{DMM} (see also \cite[Proposition B.2-(b)]{Aksamit}), for a $\mathbb{G}$-stopping time $\sigma^{\mathbb{G}}$, there exists an  $\mathbb{F}$-stopping time $\sigma$ such that 
\begin{eqnarray*}
\sigma^{\mathbb{G}}=\sigma^{\mathbb{G}}\wedge\tau=\sigma\wedge\tau.
\end{eqnarray*}
Put 
$ \sigma^{\mathbb{F}}:=\min\left(\max(\sigma,\sigma_{1}),\sigma_{2}\right),$
  and on the one hand remark that $ \sigma^{\mathbb{F}}$ is an $\mathbb{F}$- stopping time satisfying the first condition in (\ref{sigmaF}). On the other hand, it is clear that 
  \begin{eqnarray*}
  \min(\tau, \max(\sigma, \sigma_1))=(\tau\wedge\sigma_1)I_{\{\sigma_1>\sigma\}}+(\tau\wedge\sigma)I_{\{\sigma_1\leq\sigma\}}=\max(\tau\wedge\sigma, \sigma_1\wedge\tau).\end{eqnarray*}
  Thus, by using this equality, we derive
  \begin{eqnarray*}
   \sigma^{\mathbb{F}}\wedge\tau&&= \tau\wedge\sigma_2\wedge\max(\sigma,\sigma_1)=(\tau\wedge\sigma_2)\wedge(\tau\wedge\max(\sigma,\sigma_1))\\
   &&=(\tau\wedge\sigma_2)\wedge\max(\tau\wedge\sigma, \sigma_1\wedge\tau)=\sigma\wedge\tau=\sigma^{\mathbb G}.
   \end{eqnarray*}
   This ends the proof of the lemma. \end{proof}
\begin{proof}[Proof of Lemma \ref{L/EpsilonTilde}] 
1) Herein, we prove assertion (a). Let $L$ be an $\mathbb F$-semimartingale. Then throughout this proof we put  $X:=L\widetilde{\cal E}^{-1}I_{\Lbrack0,\tau\Lbrack}$, and we derive
 \begin{align*}
 X&= {{L^{\tau}}\over{\widetilde{\cal E}^{\tau}}}- {{L}\over{\widetilde{\cal E}}}\is{ D}- {L_0}I_{\{\tau=0\}} =X_0+L\is {1\over{\widetilde{\cal E}^{\tau}}}+{1\over{\widetilde{\cal E}_{-}}}\is{ L}^{\tau}- {{L}\over{\widetilde{\cal E}}}\is{ D}\\
&= X_0+{{L}\over{G\widetilde{\cal E}_{-}}}I_{\Rbrack0,\tau\Rbrack}\is{D}^{o,\mathbb F} +{1\over{\widetilde{\cal E}_{-}}}\is{ L}^{\tau}- {{L}\over{\widetilde{\cal E}}}\is{ D}\\
&=X_0+  {{L}\over{\widetilde{G}\widetilde{\cal E}}}I_{\Rbrack0,\tau\Rbrack}\is{D}^{o,\mathbb F} +{1\over{\widetilde{\cal E}_{-}}}\is{ L}^{\tau}- {{L}\over{\widetilde{\cal E}}}\is{ D}=X_0- {{L}\over{\widetilde{\cal E}}}\is{N}^{\mathbb G}+{1\over{\widetilde{\cal E}_{-}}}\is{ L}^{\tau}.
\end{align*}
 The third equality follows from $d\widetilde{\cal E}^{-1}=\widetilde{\cal E}^{-1}_{-}G^{-1}d{D}^{o,\mathbb F}$, while the  fourth equality is due to $\widetilde{\cal E}=\widetilde{\cal E}_{-}G/\widetilde{G}$. A combination of the above latter equality with $X_0=L_0{I}_{\{\tau>0\}}$ proves (\ref{LoverEpsilonTilde}). \\
 2) This part proves assertion (b). To this end, in virtue of Lemma \ref{Decomposition4G}, we remark that $LG^{-1}I_{\Lbrack0,\tau\Lbrack}=G_0^{-1}{\widetilde{Z}}X$,  where ${\widetilde{Z}}$ is defined in (\ref{Ztilde}) and satisfies 
 \begin{equation*}
 {\widetilde{Z}}^{\tau}=1/{\cal{E}}(G_{-}^{-1}\is{m})^{\tau}={\cal{E}}(-G_{-}^{-1}\is{\cal{T}}(m)).\end{equation*}
Here, thanks to $G>0$ and Lemma \ref{Decomposition4G}, we used the fact that ${\cal{T}}(M)=M^{\tau}-\widetilde{G}^{-1}\is[M,m]^{\tau}$ for any $\mathbb{F}$-local martingale $M$. Thus, by applying It\^o to ${\widetilde{Z}}X$, we get 
 \begin{align*}
{\widetilde{Z}}X&={\widetilde{Z}}^{\tau}X=X_0+{\widetilde{Z}}_{-}\is{X}+X_{-}\is{\widetilde{Z}}+[X,{\widetilde{Z}}]\\
&=X_0+{\widetilde{Z}}_{-}\is{X}-X_{-}{\widetilde{Z}}_{-}G_{-}^{-1}\is{\cal{T}}(m)-{\widetilde{Z}}_{-}G_{-}^{-1}\is[X,{\cal{T}}(m)]\\
&=X_0+{\widetilde{Z}}_{-}\is{X}-X_{-}{\widetilde{Z}}_{-}G_{-}^{-1}\is{\cal{T}}(m)-{\widetilde{Z}}_{-}\widetilde{G}^{-1}\is[X,m]^{\tau}.
 \end{align*}
 Thus, by inserting (\ref{LoverEpsilonTilde}) into the above latter equality and using the facts $\widetilde{Z}_{-}/\widetilde{\cal E}_{-}=G_0/G_{-}$, $X_{-}=L_{-}\widetilde{\cal E}_{-}^{-1}I_{\Rbrack0,\tau\Rbrack}$, $\widetilde{Z}=\widetilde{Z}_{-}G_{-}/\widetilde{G}$ and $\widetilde{\cal E}=\widetilde{\cal E}_{-}G/\widetilde{G}$, we obtain
\begin{align*}
{\widetilde{Z}}X&=X_0 - {{G_0{\widetilde{G}}L}\over{GG_{-}}}\is{N}^{\mathbb G}+{{G_0}\over{G_{-}}}\is{ L}^{\tau}-{{G_0L_{-}}\over{G_{-}^2}}\is{\cal{T}}(m)-G_0{\widetilde{G}}^{-1}\widetilde{G}^{-1}\is[L,m]^{\tau}+{{G_0{L}\Delta{m}}\over{GG_{-}}} \is{N}^{\mathbb G}\\
&=X_0 - {{G_0L}\over{G}}\is{N}^{\mathbb G}+{{G_0}\over{G_{-}}}\is{\cal{T}}(L)-{{G_0L_{-}}\over{G_{-}2}}\is{\cal{T}}(m).  \end{align*}
  Therefore, by combining this latter equality with $LG^{-1}I_{\Lbrack0,\tau\Lbrack}=G_0^{-1}\widetilde{Z}X$, the equality (\ref{LoverG}) follows immediately, and the proof of the lemma is complete.
\end{proof}


\begin{thebibliography}{999}
 \bibitem{Agram} Agram, N., Haadem, S., {\O}ksendal, B. and Proske, F.: Optima stopping, randomized stopping and singular control with general information flow. Theory Probab. Appli. Vol. 66, No. 4, pp. 601-612.
\bibitem {Aksamit}Aksamit, A., Choulli, T., Deng, J., and Jeanblanc, M.: No-arbitrage up to random horizon for quasi-left-continuous models. Finance Stoch. 21(4) (2017), 1103-1139
\bibitem{Choulli6} Alsheyab, S. and Choulli : Reflected backward stochastic differential equations under stopping with an arbitrary random time. arXiv:2107.11896 (2021).
\bibitem{Arai} Arai, T and Takenaka, M. : Constrained optimal stopping under regime switching model. Axiv: 2204.07914v1 (2022). 
\bibitem{Guo}  {{Chakrabarty, A. and Guo, X.: Optimal stopping times with different information levels with time uncertainty,  Stochastic Analysis and Applications to Finance, pp. 19-38 (2012).}}
 \bibitem{ChoulliDavelooseVanmaele} Choulli, T., Daveloose, C. and Vanmaele, M. : A martingale representation theorem and valuation of defaultable securities. Mathematical Finance, 1-38 (2020).
 \bibitem{Choulli1} Choulli and Yansori: Explicit description of all deflators for markets under random horizon with application to NFLVR. Stochastic Processes and their Applications 151 (2022) 230-264.
\bibitem{DellacherieMeyer80} Dellacherie, C. and Meyer, P-A. : Probabilit\'es et Potentiel, Th\'eorie des martingales. Chapter V to VIII. Hermann, (1980).
\bibitem{DELLACHERIE} {Dellacherie, C.,and P.-A. Meyer: Probabilities and Potential B: Theory of Martingales. New York: North-Holland, 1982.}
\bibitem{DMM}Dellacherie, C., Maisonneuve, B., Meyer, P.-A.: Probabilit\'es et Potentiel: Tome V, Processus de Markov (fin), Compl\'ements de Calcul Stochastique, chapitres XVII-XXIV. Hermann, Paris (1992).
\bibitem{Vanmaele1}Di Nunno, G., Khedher, A. and Vanmaele, M.: Robustness of quadratic hedging strategies in finance via backward stochastic differential equations with jumps.  Applied Mathematics and Optimization, 72(3). p.353-389 (2015).
 \bibitem{ElKaroui}  El Karoui, N., Les aspects probabilistes du control stochastiques, in Ninth Saint Flour Probability Summer School-1979 (Saint Flour, 1979), 73-238, Lecture Notes in Math. 876, Springer, Berlin.
\bibitem{ElqarouiBSDE}El Karoui, N.; Kapoudjian, C.; Pardoux, E.; Peng, S.; Quenez M.C., Reflected Solutions
of Backward SDE and Related Obstacle Problems for PDEs, Ann. Probab. Vol. 25,
No. 2, 702-737 (1997).
 \bibitem{Quenez} El Karoui, N., Peng,S.G.,  and Quenez, M.C., Backward stochastic differential equations in finance, Math. Finance 7(1) (1997), pp. 1-71.
\bibitem{Hamadane0}Hamad\`ene, S., Lepeltier, J.P. and Zhen, W. (1999) "Infinite Horizon Reflected BSDE and its applications in Mixed Control and Game Problems", Probab. Math. Stat. 19(Fasc. 2), 211-234, Wroclaw.
 \bibitem{Hamadane1}  Hamad\`ene, S., and  Lepeltier, J.P., Backward equations, stochastic control and zero-sum stochastic differential games, Stoch. Stoch. Rep. 54(3-4) (1995), pp. 221-231.
\bibitem{YanBook} {{He,  S.W, Wang, J.G.,  and Yan, J.A., Semimartingale theory and stochastic calculus, Science Press and CRC Press, Beijing New York, 1992.}}
\bibitem{JS03} Jacod, J. and Shiryaev, A. : Limit Theorems for Stochastic Processes, 2ed edn. Springer, (2003).
\bibitem{Jeulin1980} Jeulin, T. : Semi-martingales et grossissement d'une filtration. Springer, (1980).
   \bibitem{Vanmaele2}Khedher, A. and Vanmaele, M.: Discretisation of FBSDEs driven by c\`adl\`ag martingales. Journal of Mathematical Analysis and Applications, 435(1), pp: 508-531 (2016). 
   \bibitem{Quenez1}  Kobylanski, M. and Quenez, M.C.,  Optimal stopping time problem in a general framework. Electron. J. Probab. 17, no. 72, 1-28 (2012).
\bibitem{Neveu}  Neveu, J., Discrete Parameter Martingales, North Holland, Amsterdam Londres, 1975.
  \bibitem{Oksendal0} {\O}ksendal, B, Optimal stopping with delayed information, Stoch. Dyn., 5 (2005), pp. 271-280.
 \bibitem{Oksendal} {\O}ksendal, B. and Zhang, T., Backward stochastic differential equations with respect to general filtrations and applications to insider finance, Commun. Stoch. Anal. 6(4) (2012), pp. 703-722.
\bibitem{Peskir}  Peksir, G. and Shiryaev, A., Optimal stopping and free boundary problems, Lectures in Mathematics ETH Z\"urich, Birkhauser, Basel, 2006.
\bibitem{QuenezSulem} Quenez, M-C. and Sulem A.  \newblock{Refelcted BSDEs and robust optimal stopping for dynamic risk measures with jumps}, Stochastic Processes and their Applications, Volume 124, Issue 9, 3031-3054 (2014).
   \bibitem{Shiryaev}  Shiryaev, A., Optimal stopping rules, translated from the Russian by A.B. Aries, Springer, New York, 1978. 

\end{thebibliography}
\end{document}